\def\captionof#1#2{{\def\@captype{#1}#2}}
\newcounter{tablegroup}
\newcommand{\tend}[3][]{\xrightarrow[#2\to#3]{#1}}
\newtheorem{thm}{Theorem}[section]
\newtheorem{cor}[thm]{Corollary}
\newtheorem{lem}[thm]{Lemma}
\newtheorem{prop}[thm]{Proposition}
\newtheorem{defn}[thm]{Definition}
\newtheorem{Que}[thm]{Question}
\numberwithin{equation}{section}
\newcommand{\C}{\mathbb C}
\newcommand{\bmu}{\bm \mu}
\newcommand{\bml}{\bm \lambda}
\begin{document}
\title[A large class of dendrite maps ]
{A large class of dendrite maps for which M\"{o}bius disjointness property of Sarnak is fulfilled.}

\author[\MakeLowercase{e.} H. \MakeLowercase{el} Abdalaoui, 
J. Devianne]{\MakeLowercase{el} Houcein \MakeLowercase{el} Abdalaoui \& Joseph Devianne}

\address{e. H. el Abdalaoui, Normandy University of Rouen,
Department of Mathematics, LMRS  UMR 6085 CNRS, Avenue de l'Universit\'e, BP.12,
76801 Saint Etienne du Rouvray - France.}
\email{elhoucein.elabdalaoui@univ-rouen.fr}
\address{ J. Devianne, Ecole normale sup\'erieure Paris-Saclay,
61, avenue du Président Wilson, 94230 Cachan; France;}
\email{joseph.devianne@ens-paris-saclay.fr}

\subjclass[2000]{ 37B05, 37B45, 37E99}

\keywords{ dendrite, endpoints, $\omega$-limit set, discrete spectrum, singular spectrum, 
M\"{o}bius function, Liouville function, M\"{o}bius disjointness conjecture of Sarnak}

\begin{abstract}
We prove that any dendrite map for which the set of endpoints is closed and countable fulfilled Sarnak M\"{o}bius disjointness. This extended a result by el Abdalaoui-Askri and Marzougui \cite{ela-GH}. We further notice that the Smital-Ruelle property can be extended to the class of dendrites with closed and countable endpoints. 
\end{abstract}

\parskip0pt
\baselineskip11pt

\maketitle

\section{\bf Introduction}
Let $X$ be a compact metric space with a metric $d$ and let $f: X\to X$ be a continuous map.
We call for short ($X, f$) a dynamical system. The M\"{o}bius disjointness conjecture of Sarnak \cite{Sa}, \cite{Sa2} assert that for any dynamical system ($X, f$) with topological entropy zero, for any continuous function $\phi : X \longrightarrow \C$, for any point $x \in X$, we have
\begin{eqnarray}\label{first}
S_N(x,\varphi): = \frac{1}{N} \sum_{n=1}^N \bmu(n)\varphi(f^n (x)) = o(1), \textrm{as } \ N\to +\infty,
\end{eqnarray}   
where $\bmu$ is the M\"{o}bius function defined by
$$\bmu(n)  =  \begin{cases}
1  & \ \textrm{if } n = 1 \\
\bml(n) & \ \textrm{if } \textrm{all primes in decomposition of $n$ are distinct} \, \\
0 & \ \textrm{otherwise},
\end{cases}$$
Here, $\bml$ stand for the Liouville function $\bml$ given by $\bml(n) =1$ if the number of prime factors of $n$ is even and $-1$ otherwise.

We recall that the topological entropy $h(f)$ of a dynamical system $(X,f)$ is defined
as \[h(f) = \underset{\varepsilon\to 0}\lim \underset{n\to +\infty}\limsup \dfrac{1}{n}
\textrm{log}~\textrm{sep}(n, f, \varepsilon),\]
where for $n$ integer and $\varepsilon>0$, $\textrm{sep}(n, f, \varepsilon)$ is the maximal possible cardinality of
an ($n, f, \varepsilon$)-separated set in $X$, this later means that for every
two points of it, there exists $0\leq j<n$ with $d(f^j(x), f^j(y)) > \varepsilon$, $f^{j}$ stand for
the $j$-$\textrm{th}$ iterate of $f$.
 
The purpose of this paper is to strengthen the first main result in \cite{ela-GH} by proving that the M\"{o}bius disjointness of Sarnak is fulfilled for a zero topological map on the dendrites for which the set of endpoints is closed and countable. In our  proof, we extend and correct the Askri's proof of the main theorem in \cite{Askri}. We further fill the gap in the proof of Theorem 4.10 in \cite{ela-GH} when the derivative set of the endpoints does not intersect an open connected component of the canonical decomposition of dendrite.

Our result is also related to the conjecture of J. Li, P. Oprocha and G-H. Zhang  which assert that the spectrum of such map is discrete \cite{JOZ}.  Therefore, under this conjecture, the  M\"{o}bius disjointness property of Sarnak is fulfilled. Indeed, it is well-known that a dynamical system with discrete spectrum satisfy the  M\"{o}bius disjointness \cite{Huang}, \cite{AKLR2}. Let us further notice that recently, M. Nerurkar and the fist author proved that the M\"{o}bius disjointness of Sarnak  holds for a dynamical system with singular spectrum \cite{AM2}. At this point, let us mention that J. Li, P. Oprocha and G. Zhang  proved that any dynamical system with zero topological entropy can be embedded in a  Gehman dendrites with zero entropy.  It follows that proving the M\"{o}bius disjointness of Sarnak will settle the conjecture. We thus restrict ourselves to this setting and for the recent and other results, we  refer to \cite{ela-GH} and \cite{AM2}.
\medskip

\noindent The principal arithmetical tool of our proof is the Dirichlet prime number theorem (DPNT) \cite{Apostol}. We stress, as pointed out by e.H. el Abdalaoui and M. Nerurkar \cite{AM2}, that the only arithmetical ingredients used until now  are the following.

\begin{enumerate}
	\item The prime number theorem (PNT) and the Dirichlet PNT.
	\item The so-called Daboussi-Katai-Bourgain-Sarnak-Ziegler criterion. This criterion assert  that if a sequence $(a_n)$ satisfy for a large prime $p$ and $q$, the orthogonality of $(a_{np})$ and $(a_{nq})$, then the orthogonality holds between $(a_n)$ and any bounded multiplicative function. The proof of this ingredient is based on the PNT and it was used first by Bourgain-Sarnak-Ziegler \cite{BSZ} and then by many other authors. Recently, this criterion was generalized by M. Cafferata, A. Perelli and  A. Zaccagnini to some class of no bounded multiplicative functions such as the normalized Fourier coefficients $\lambda_f$ of the normalized Hecke eigenform $f$ and its Dirichlet inverse \cite{CPZ}.  
	
	\item Finally, the result of Matomaki-Radzwill-Tao on the validity of average Chowla of order two \cite{MRT}. This was used to establish the conjecture for systems with discrete spectra. It was used by el Abdalaoui-Lema\'{n}czyk-de-la-Rue  \cite{ALR}, and Huang-Wang and Zhang \cite{HWZ}.
\end{enumerate}
Let us emphasize also that the method of the authors in \cite{AM2} does not use any of the above techniques. In fact, therein, the authors presented a spectral dissection based on an unpublished result of W. Veech. From the number theory, they used Davenport's estimation combined with Daboussi's characterization of the Besicovitch class of multiplicative functions \cite{Daboussi}.

We recall that the dynamical system $(X,f)$ is said to have a the topological  discrete spectrum if the eigenfunctions of its Koopmann operator on the space $C(X)$ span a dense linear subspace of the $C(X)$ ( $C(X)$ is the space of continuous functions equipped with the strong topology). It is well-known that this is later property is equivalent to the equicontinuity of the dynamical system \cite{walter}. It also well-known that the system $(X,f)$ has always an invariant probability measure $\mu$, by  Krylov-Bogoliubov theorem \cite{walter},  that is, for any $f \in C(X)$,
$\int \phi \circ f d\mu=\int f d\mu.$ The measurable dynamical system $(X,f, \mu)$ is said to have a discrete spectrum if the eigenfunctions of its Koopmann operator on the space $L^2(X,\mu)$ span a dense linear subspace of the $L^2(X,\mu)$. In \cite{Kush}, Kusherinko established that the measurable dynamical system has a discrete spectrum  if and only if its metric sequence entropy is zero for any sequence. We recall that the system for which the sequence entropy is zero for any sequence are called null systems. It is turns out that the notion of null system is related to the so-called tame system. This later notion was coined by E. Glasner in \cite{Glasner}. The dynamical system $(X,T)$ is tame if the closure of $\big\{T^n / n \in \mathbb{Z}\big\}$ in $X^X$  is Rosenthal compact \footnote{$X^X$ is equipped with the pointwise convergence. This closure is called the enveloping semigroup of Ellis.}. We recall that the set $K$ is Rosenthal compact if and only if there is a Polish space $P$ such that $K \subset {\textrm {Baire-1}}(P)$ where  ${\textrm {Baire-1}}(P)$ is the first class of Baire functions, that is, pointwise limit of continuous functions on $P$. By Bourgain-Fremelin-Talagrand's theorem \cite{BFT}, $K$ is  Rosenthal compact if and only if $K$ is a subset of the Borel functions on $P$ with $K=\overline{\{f_n\}},$ $f_n \in C(P)$.

The precise connection between null systems and tame systems can be  stated as follows \cite{G}, \cite{Ke}, \cite{HLSY} 
$$\mathcal{N}\subset \mathcal{T},$$ where $\mathcal{N}$ is the class of null systems and $\mathcal{T}$ is the class of tame systems.

In \cite{ela-GH}, using Kusherinko's criterion, it is observed that the graph map has a discrete spectrum, for any invariant measure. In \cite{JOZ}, the authors gives an alternative proof. Therein, the authors strengthened the previous result, by establishing that the quasi-graph has a discrete spectrum, for any invariant measure. E. Glasner and M. Megrelishvili proved that every continuous action on dendron is tame \cite{Glasner}. Therein, the authors established also that every action of topological group on metrizable regular continuum is null, hence, tame. Therefore, for all those systems, the M\"{o}bius disjointness of Sarnak holds. It is also observed in \cite{ela-GH} that the M\"{o}bius disjointness property is fulfilled for the monotone maps on a local dendrites. We recall that for every topological space $X$, a map $f: ~X\to X$ is called \textit{monotone} if $f^{-1}(C)$ is connected for any connected subset $C$ of $X$. In particular, if $f$ is a homeomorphism then it is monotone.
\medskip

For the connection between the measurable notion of tame and discrete spectrum, we refer to \cite{AM}.

\medskip

Recently, G. Askri \cite{Askri2} proved that if the map on a dendrites has no Li-Yorke pairs, then its restriction to $\Lambda(f)$, the union of all $\omega$-limit sets and the set of fixed points, is uniform equicontinuous. It follows that he M\"{o}bius disjointness property hold for such maps.  
\medskip

The plan of the paper is as follows. In Section \ref{SII}, we give some definitions and preliminary properties
on dendrites which are  useful for the rest of the paper. In section \ref{SIII}, we state our main results and its consequences, we further give the proof of the main topological ingredient need it in its proof. Section \ref{SIV} is devoted to the proof of our main result. 
 Finally, in section \ref{SV}, we discuss the reduction of Sarnak M\"{o}bius disjointness to the Gehman dendrites with zero topological and its related spectral problems.  
\medskip

\section{\bf Preliminaries and some results}\label{SII}
Let $\mathbb{Z},\ \mathbb{Z}_{+}$ and $\mathbb{N}$ be the sets of integers, non-negative integers and positive integers,
respectively. For $n\in \mathbb{Z_{+}}$ denote by  $f^{n}$ the $n$-$\textrm{th}$ iterate of $f$; that is,
$f^{0}$=identity and $f^{n} = f\circ f^{n-1}$ if $n\in \mathbb{N}$. For any $x\in X$, the subset
$\textrm{Orb}_{f}(x) = \{f^{n}(x): n\in\mathbb{Z}_{+}\}$ is called the \textit{orbit} of $x$ (under $f$).
A subset $A\subset X$ is called \textit{$f-$invariant} (resp. strongly $f-$invariant)
if $f(A)\subset A$ (resp., $f(A) = A$). It is called \textit{a minimal set of $f$} if it is
non-empty, closed, $f$-invariant and minimal (in the sense of
inclusion) for these properties, this is equivalent to say that it is an orbit closure that contains no smaller one;
for example a single finite
orbit. When $X$ itself is a minimal set, then we say that $f$ is \textit{minimal}.
We define the \textit{$\omega$-limit} set of a point $x$ to be the set
$\omega_{f}(x)  = \{y\in X: \exists\ n_{i}\in \mathbb{N},
n_{i}\rightarrow\infty, \underset{i\to +\infty}\lim d(f^{n_{i}}(x), y) = 0\}$. A point $x\in X$ is called 

$-$ \textit{periodic} of period $n\in\mathbb{N}$ if
$f^{n}(x)=x$ and $f^{i}(x)\neq x$ for $1\leq i\leq n-1$; if $n = 1$, $x$ is called a \textit{fixed point} of $f$ i.e.
$f(x) = x$. 

$-$ \textit{Almost periodic} if for any neighborhood $U$ of $x$ there is $N\in\mathbb{N}$ such that
$\{f^{i+k}(x): i=0,1,\dots,N\}\cap U\neq \emptyset$, for all $k\in \mathbb{N}$. 
It is well known (see e.g. \cite{blo}, Chapter V, Proposition 5) that a point
$x$ in $X$ is almost periodic if and only if $\overline{\textrm{Orb}_{f}(x)}$ is a minimal set of $f$.

A pair $(x,y) \in X \times X$ is called
proximal if  $\liminf_{n \rightarrow +\infty}d(f^n(x),f^n(y)) = 0$;  it is called asymptotic if $\lim_{n \rightarrow +\infty }d(f^n(x),f^n(y)) = 0$.  A pair $(x,y) \in X \times X$ is
is said to be a Li-Yorke pair of $f$ if it is proximal but not asymptotic.

In this section, we recall some basic properties of graphs, tree and dendrites.

A \emph{continuum} is a compact connected metric space. An \emph{arc} is
any space homeomorphic to the compact interval $[0,1]$. A topological space
is \emph{arcwise connected} if any two of its points can be joined by an
arc. We use the terminologies from Nadler \cite{Nadler}.
\medskip

By a \textit{graph} $X$, we mean a continuum which can be written as the union of finitely many arcs
such that any two of them are either
disjoint or intersect only in one or both of their endpoints.
 For any point $v$ of $X$, the \textit{order} of $v$, 
denoted by $\textrm{ord}(v)$, is an integer $r\geq 1$ such that $v$ admits a neighborhood $U$ in $X$
homeomorphic to the set $\{z\in \mathbb{C}: z^{r}\in [0,1]\}$ with
the natural topology, with the
homeomorphism mapping $v$ to $0$. If $r\geq 3$ then $v$ is called a \textit{branch point}. If $r=1$, then we call $v$ an
\textit{endpoint} of $X$. If $r=2$, $v$ is called 
\textit{a regular point} of $X$. 

 Denote by $B(X)$ and $E(X)$ the sets of
branch points and endpoints of $X$ respectively. An edge is the
closure of some connected component of $X\setminus B(X)$, it is
homeomorphic to $[0,1]$. A subgraph of $X$ is a subset of $X$ which
is a graph itself. Every sub-continuum of a graph is a graph
(\cite{Nadler}, Corollary 9.10.1).
Denote by $S^{1}=[0,1]_{\mid 0\sim 1}$ the unit circle endowed with the
orientation: the counter clockwise sense induced via the natural
projection $[0,1]\rightarrow S^{1}$. A circle is any space homeomorphic to $S^{1}$.
\medskip
By a \textit{tree} $X$, we mean a graph which
contains no simple closed curve. A point in $X$ is a non-cut point if and only if it is an endpoint of $X$ (\cite{Nadler}, Proposition 9.27), and a continuum $X$ is a tree if and only if $X$ has only
finitely many non-cut points  (\cite{Nadler}, Theorem 9.28). As a consequence, a continuum $X$ is an arc if and only if $X$ has exactly two non cut-points (\cite{Nadler}, Corollary 9.29). 

By a \textit{dendrite} $X$, we mean a locally connected continuum
containing no homeomorphic copy to a circle. Every sub-continuum of a
dendrite is a dendrite (\cite{Nadler}, Theorem 10.10) and every
connected subset of $X$ is arcwise connected (\cite{Nadler},
Proposition 10.9). In addition, any two distinct points $x,y$ of a
dendrite $X$ can be joined by a unique arc with endpoints $x$ and
$y$, denote this arc by $[x,y]$ and let us denote by
$[x,y) = [x,y]\setminus\{y\}$ (resp. $(x,y] = [x,y]\setminus\{x\}$ and
$(x,y) = [x,y]\setminus\{x,y\}$). A point $x\in X$ is called an
\textit{endpoint} if $X\setminus\{x\}$ is connected. It is called a
\textit{branch point} if $X\setminus \{x\}$ has more than two
connected components. The number of connected components of $X\setminus \{x\}$ is called the \textit{order} of $x$ and 
denoted by ord$(x)$. The order of $x$ relatively to a subdendrite $Y$ of $X$ is denoted by $ord_Y(x)$.
Denote by $E(X)$ and $B(X)$ the sets of
endpoints, and branch points of $X$, respectively.
By (\cite{Kur}, Theorem 6, 304 and Theorem 7, 302), $B(X)$ is at most countable. A point $x\in
X\setminus E(X)$ is called a \textit{cut point}. It is known that the  set of cut
points of $X$ is dense in $X$ (\cite{Kur}, VI, Theorem 8, p. 302).
Following (\cite{Ar}, Corollary 3.6), for any dendrite $X$, we have
B($X)$ is discrete whenever E($X)$ is closed.  An arc $I$ of $X$ is called \emph{free} if $I \cap B(X)=\emptyset$.
For a subset $A$ of $X$, we call \emph{the convex hull} of $A$, denoted by $[A]$, the intersection of all
sub-continua of $X$ containing $A$, one can write $[A] = \cup_{x, y\in A}[x,y]$.   

\medskip
A continuous map from a  dendrite (resp. quasi-graph, graph )  into itself is called a\textit{ dendrite map} (resp. \textit{quasi-graph map}, resp. \textit{graph map}). 

\medskip
It is well known that every dendrite map has a fixed point (see \cite{Nadler}). If $Y$ is a sub-dendrite of $X$, define the retraction (or the \textit{first point map}) $r_{Y} : X \rightarrow Y$
by letting $r_{Y}(x) = x$, if $x\in Y$, and by letting $r_{Y}(x)$ to be the unique point $r_{Y}(x)\in Y$ such that
$r_{Y}(x)$ is a point of any arc in $X$ from $x$ to any point of $Y$ (see \cite[Lemma 10.24,p. 176]{Nadler} 
Note that the map $r_{Y}$ is constant on each connected component of $X\backslash Y$.

Furthermore, any dendrite can be approximated by tree, that is, there exist an increasing sequence of tree $(Y_i)$ such that $\lim_{i}Y_i=X$ and the sequence of first point maps $(r_{Y_i})$ converges uniformly to the identity map on $X$
(see \cite{Nadler}, Theorem 10.27)
\medskip
As customary, for a subset $A$ of $X$, denote by $\overline{A}$ the closure of $A$, $A'$ the derivative set, that is, the set of all accumulation points of $A$ and by $\textrm{diam}(A)$ the diameter of $A$. For the proprieties of derivative set, we refer to \cite[$\S$ 9, pp.75-78 ]{KurI}. 

\medskip
\noindent We need the following results on the structure of dendrites.

\begin{prop}[\cite{Nadler}]
	Every sub-continuum of a dendrite is a dendrite.
	Every connected subset of $X$ is arcwise connected.
\end{prop}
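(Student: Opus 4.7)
The plan is to handle the two assertions in turn, both leveraging the unique-arc property of dendrites: for any two points $x,y \in X$ there is a unique arc $[x,y]$, and every interior point of $[x,y]$ separates $x$ from $y$ in $X$. I would dispatch the arcwise connectedness statement first, since it is shorter and illustrates how the separation property does the real work. Given a connected subset $S \subseteq X$ and two points $x, y \in S$, I claim $[x,y] \subseteq S$. Suppose for contradiction that some interior point $z$ of $[x,y]$ were not in $S$; then $X \setminus \{z\}$ decomposes into two open components, one containing $x$ and the other containing $y$, so their traces on $S$ would yield a nontrivial separation of $S$, contradicting connectedness. Hence $[x,y] \subseteq S$ and $S$ is arcwise connected.

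For the first assertion, let $C$ be a sub-continuum of a dendrite $X$. That $C$ contains no simple closed curve is immediate, since any circle in $C$ would also be a circle in $X$. The crux is local connectedness of $C$. My plan is to use the first-point retraction $r_C : X \to C$: for $x \notin C$, send $x$ to the unique point of $C$ that appears as the initial point of every arc in $X$ from $x$ to a point of $C$. The unique-arc and hereditary unicoherence properties of dendrites make $r_C$ well-defined and continuous, and each fibre $r_C^{-1}(p)$ is connected, as it consists of $p$ together with the branches of $X \setminus C$ attaching to $C$ at $p$. Thus $r_C$ is a monotone continuous surjection from the locally connected continuum $X$ onto $C$. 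Appealing to the classical theorem that a monotone continuous image of a locally connected continuum is locally connected then gives local connectedness of $C$, completing the proof.

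The principal obstacle will be the local connectedness step in the first assertion, since sub-continua of a general locally connected continuum need not be locally connected. Everything hinges on the rigidity of the dendrite structure, specifically that every sub-continuum admits a well-defined first-point retraction with connected fibres. Making this airtight requires invoking the standard dendrite toolkit from \cite{Nadler}, notably the result giving the retraction $r_C$ and the monotone-image theorem for local connectedness. Once those are available, both parts of the proposition drop out without further work.
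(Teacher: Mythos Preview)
The paper does not supply its own proof of this proposition; it is stated as a citation from Nadler's monograph (Theorem~10.10 and Proposition~10.9 there) and used as background. So there is no in-paper argument to compare against.

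Your proposal is correct. The arcwise-connectedness argument is the standard one: unique arcs in a dendrite force every interior point of $[x,y]$ to be a cut point separating $x$ from $y$, so a connected $S$ containing $x,y$ must swallow the whole arc. For the first assertion, your route via the first-point retraction $r_C$ is sound; note that well-definedness of $r_C$ (independence of the target point $c\in C$) quietly uses that $[c,c']\subseteq C$ for $c,c'\in C$, which is precisely the arcwise-connectedness statement you established first---so your ordering is not just expository but logically required. One small simplification: once you have a continuous surjection $r_C:X\to C$ between compact metric spaces, it is automatically a quotient map, and quotients of locally connected spaces are locally connected; monotonicity of $r_C$ is therefore not strictly needed for the local-connectedness conclusion, though it does no harm.
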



\begin{prop}[\cite{Nadler}, Theorem 10.2]
	The set of all branch points is countable.
\end{prop}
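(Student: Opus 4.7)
The plan is to use a median-point argument combined with the separability of $X$. As a compact metric space, $X$ has a countable dense subset $D$. For each branch point $b\in B(X)$, the set $X\setminus\{b\}$ has at least three connected components $C_1(b), C_2(b), C_3(b)$ by the definition given in the preceding discussion. Since a dendrite is locally connected and $\{b\}$ is closed, each $C_i(b)$ is open, so I can pick a point $d_i(b)\in D\cap C_i(b)$ for $i=1,2,3$ and associate with $b$ the unordered triple $T(b)=\{d_1(b),d_2(b),d_3(b)\}\subset D$.

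The heart of the argument is to show that the map $b\mapsto T(b)$ is injective. Because $d_i(b)$ and $d_j(b)$ lie in distinct components of $X\setminus\{b\}$, the unique arc $[d_i(b),d_j(b)]$ must pass through $b$; equivalently, $b$ is a \emph{median} of the triple $T(b)$, i.e.\ a point lying on each of the three arcs $[d_i(b),d_j(b)]$ for $i\ne j$. I then need to establish uniqueness of the median: given three points $p_1,p_2,p_3$ of a dendrite, there is exactly one point $m$ such that $m\in[p_i,p_j]$ for all $i\ne j$. This follows because $[p_1,p_2]\cap[p_1,p_3]$ is a subcontinuum of the arc $[p_1,p_2]$ containing $p_1$, hence itself an arc of the form $[p_1,m]$, and a short check then shows that $m\in[p_2,p_3]$ and is the only candidate. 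Applied to $p_i=d_i(b)$, this uniqueness forces $b=m$, so $b$ is recovered from $T(b)$.

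Once injectivity of $T$ is in hand, the proposition is immediate: $B(X)$ embeds into the countable set of three-element subsets of $D$, so it is countable. The only technical point to be careful about is the uniqueness of the median, which in turn hinges on showing that the intersection of two arcs sharing an endpoint is itself an arc rather than an arbitrary subcontinuum; this is the main (though brief) obstacle, and I would either cite it from Nadler or verify it directly from the dendrite axioms, using the fact recalled above that every sub-continuum of a dendrite is a dendrite and that any two of its points are joined by a unique arc.
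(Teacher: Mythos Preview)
Your argument is correct. The paper itself supplies no proof of this proposition: it is simply quoted from Nadler (and, a few lines earlier in the text, the same fact is attributed to Kuratowski), so there is no in-paper argument to compare against. Your median-based injection of $B(X)$ into the finite subsets of a countable dense set is one of the standard routes to this result and works as written; the only point you flag as delicate---that $[p_1,p_2]\cap[p_1,p_3]$ is a subarc $[p_1,m]$---follows from hereditary unicoherence of dendrites (the intersection of two subcontinua is connected), which is indeed available from the facts recalled in the paper, and uniqueness of the median then drops out from $[p_1,m]\cap[m,p_2]=\{m\}$ inside the arc $[p_1,p_2]$.
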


\begin{prop}[\cite{Nadler}, p.187, 10.41 ]\label{Cut-dense}
	Let $X$ be a dendrite. The set of points of order $2$ is continuumwise dense in $X$.
\end{prop}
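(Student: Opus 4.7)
The plan is to show that every non-degenerate sub-continuum $C \subset X$ contains at least one (in fact, uncountably many) point of order $2$, which is precisely the assertion that the set of order-$2$ points is continuumwise dense in $X$.

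First, I would reduce to an arc: by the earlier proposition, $C$ is itself a dendrite, and since $C$ is non-degenerate there exist two distinct points $x, y \in C$. Then the unique arc $[x, y] \subset X$ joining them lies entirely in $C$, by arcwise connectedness of $C$ together with uniqueness of arcs in the dendrite $X$.

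Second, I would argue that every interior point $z \in (x, y)$ is a cut point of $X$, i.e. $\textrm{ord}(z) \geq 2$. Indeed, if $X \setminus \{z\}$ were connected, then by arcwise connectedness of connected subsets of the dendrite $X$ there would exist an arc from $x$ to $y$ inside $X \setminus \{z\}$; combined with the arc $[x, y]$ passing through $z$ this would produce a simple closed curve in $X$, contradicting the very definition of a dendrite.

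Third, I would invoke the earlier proposition that the branch set $B(X)$ is at most countable. Since $(x, y)$ is homeomorphic to an open interval it is uncountable, hence $(x, y) \setminus B(X)$ is uncountable, and each point of this set is a cut point which is not a branch point, and so has order exactly $2$ in $X$. As these points lie in $C$, the claim follows. The only delicate step is the second one---verifying that every interior point of an arc is a cut point of $X$---which genuinely uses the two defining properties of a dendrite (no simple closed curve and arcwise connectedness of connected subsets); the remaining steps are a straightforward cardinality comparison.
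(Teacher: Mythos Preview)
Your argument is correct and is essentially the standard proof (the one given in Nadler's book, to which the paper defers). The paper itself does not prove this proposition; it merely cites \cite{Nadler}, p.~187, 10.41, so there is no in-paper proof to compare against. One minor point: in your second step, the passage from ``two distinct arcs between $x$ and $y$'' to ``a simple closed curve'' is not entirely immediate (the two arcs may share more than their endpoints), but it is a routine lemma---or, more simply, you can bypass it by appealing directly to the uniqueness of arcs in a dendrite, which the paper already records as a basic fact.
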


\begin{thm}[\cite{Nadler}, p. 188, 10.42]\label{decomposition_nadler}
	Let X be a dendrite. Then for each $\delta>0$, there are finitely many open connected open and pairwise disjoint subsets $U_1, \dots, U_r$ such that
	\begin{enumerate}
		\item $ X = \bigcup_{i=1}^r \overline{U_i}$
		\item $\mathrm{diam}(U_i) < \delta$
		\item $card (U_i \cap U_j) \leq 1$
	\end{enumerate}
\end{thm}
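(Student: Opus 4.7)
The plan is to reduce the decomposition of the dendrite $X$ to the elementary decomposition of a finite sub-tree, and then transport the cutting back through the first point retraction. Using the tree approximation recalled before the statement, I would first fix a sub-tree $T\subset X$ such that the retraction $r_T\colon X\to T$ satisfies $d(r_T(x),x)<\delta/3$ for every $x\in X$. Since $T$ is a finite graph, I would then select finitely many points $q_1,\dots,q_s$ in the interior of $T$, each of order $2$ in the whole dendrite $X$ --- such points are dense in every sub-arc of $T$ by Proposition \ref{Cut-dense} --- so that the connected components $T_1^\circ,\dots,T_r^\circ$ of $T\setminus\{q_1,\dots,q_s\}$ all have diameter less than $\delta/3$.

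I would take $U_1,\dots,U_r$ to be the connected components of the open set $X\setminus\{q_1,\dots,q_s\}$, indexed so that $T_i^\circ\subset U_i$; they are finitely many (at most $s+1$, since each order-$2$ cut increases the number of components by exactly one), open by local connectedness of $X$, and pairwise disjoint. The covering property $X=\bigcup\overline{U_i}$ is then immediate, because each $q_j$ lies in the closure of an adjacent $U_i$ while every other point of $X$ already belongs to some $U_i$.

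The heart of the verification is the inclusion $r_T(U_i)\subset T_i^\circ$, which gives the diameter bound. If $x\in U_i$ and $y\in T_i^\circ\subset U_i$, then $U_i$, being a connected subset of a dendrite, contains the unique arc $[x,y]$ of $X$; this arc therefore avoids $\{q_1,\dots,q_s\}$, and so $r_T(x)$, the first point of $T$ met along $[x,y]$, lies in the same component $T_i^\circ$ of $T\setminus\{q_1,\dots,q_s\}$ as $y$. Consequently, for any $x,y\in U_i$,
\[
d(x,y)\le d(x,r_T(x))+\mathrm{diam}(T_i^\circ)+d(r_T(y),y)<\tfrac{\delta}{3}+\tfrac{\delta}{3}+\tfrac{\delta}{3}=\delta.
\]
The refined disjointness $|\overline{U_i}\cap\overline{U_j}|\le 1$ for $i\neq j$ (which is the nontrivial reading of condition (3)) would follow from uniqueness of arcs in a dendrite: two points $p\neq q$ in that intersection would be joined by an arc inside each sub-dendrite $\overline{U_i}$ and $\overline{U_j}$, both of which must coincide with the unique $X$-arc $[p,q]$; hence $[p,q]\setminus\{q_1,\dots,q_s\}$ would be a non-empty set contained in $U_i\cap U_j=\emptyset$, a contradiction.

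The main obstacle, as I see it, is the simultaneous control required on the cut points. They must cut the sub-tree $T$ into pieces of diameter less than $\delta/3$ (a purely combinatorial requirement on the finite graph $T$), and at the same time they must be of order $2$ in the ambient $X$, not merely in $T$: otherwise a branch point of $T$ of infinite order in $X$ could produce infinitely many components upon removal, breaking finiteness. Proposition \ref{Cut-dense} is precisely what reconciles these two constraints, by providing an abundance of order-$2$ points of $X$ along each sub-arc of $T$.
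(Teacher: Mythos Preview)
Your argument is correct. The paper itself does not give a proof of this statement: it cites Nadler~10.42 and Kuratowski, and later (in Section~\ref{SIV}) merely records the operative idea, namely that by Proposition~\ref{Cut-dense} one can find a finite set $F$ of order-$2$ cut points of $X$ whose removal leaves components of diameter $<\delta$. Your proof is a fleshed-out version of exactly that idea---you use the tree approximation (Nadler~10.27, recalled in the preliminaries) to reduce to a finite graph, cut it at order-$2$ points of $X$ supplied by Proposition~\ref{Cut-dense}, and pull the decomposition back through the retraction. This is essentially Nadler's own proof of~10.42, so you are in line with what the paper defers to; in particular your reading of condition~(3) as $|\overline{U_i}\cap\overline{U_j}|\le 1$ is the intended one, and your uniqueness-of-arcs argument for it is the standard one.
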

For the proof of Theorem \ref{decomposition_nadler} see also the proof of \cite[Theorem 5]{Kur}.
\begin{defn}
	Let $X$ be a dendrite and $F \subset X$. We set $[F]$ the convex hull of $F$ given by the intersection of all subdendrites containing $F$.
\end{defn}
The following lemma gives a better description of convex hulls in dendrites.
\begin{lem}[\cite{Askri}]\label{dendrites_convexhull}
	Let $X$ be a dendrite and $F$ a non empty closed subset of X. Let $a \in F$, then
	\begin{itemize}
		\item $[F] = \bigcup _{z \in F} [a,z]$
		\item $E([F])\subset F$ and we have $E([F]) = F$ when $F\subset E(X)$
	\end{itemize}
	where $[A]$ stand for the convex hull of a subset $A$.
\end{lem}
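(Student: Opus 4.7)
The plan is to prove the two bullets in order, directly from the definition $[F]=\bigcap\{Y: Y \text{ is a subdendrite of }X,\ F\subset Y\}$, together with the uniqueness of arcs in a dendrite.

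For the first identity, the inclusion $\bigcup_{z\in F}[a,z]\subset[F]$ is immediate: any subdendrite containing $F$ is arcwise connected and, by uniqueness of arcs in a dendrite, must contain $[a,z]$ for every $z\in F$. For the reverse inclusion I would show that $C:=\bigcup_{z\in F}[a,z]$ is itself a subdendrite containing $F$, so by minimality $[F]\subset C$. The set $C$ contains $F$ since $a\in F$ and $z\in[a,z]$, and it is arcwise connected because it is a star of arcs through the common point $a$. The only non-trivial step is that $C$ is closed: for $x_n\in[a,z_n]\subset C$ with $x_n\to x$, extract by compactness of $F$ a subsequence $z_{n_k}\to z\in F$ and use the upper semi-continuity of the arc map $(u,v)\mapsto[u,v]$ in a dendrite (with respect to the Hausdorff metric) to conclude $x\in[a,z]\subset C$. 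The continuity of the arc map can be obtained from the tree approximation and the first-point retractions $r_{Y_i}$ recalled in the preliminaries, since on each tree $Y_i$ the arc map is manifestly continuous and the $r_{Y_i}$ converge uniformly to the identity on $X$.

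For the second bullet, I would argue $E([F])\subset F$ by contradiction. Let $e\in E([F])$; by the first part, $e\in[a,z]$ for some $z\in F$. If $e\in\{a,z\}$ then $e\in F$. Otherwise $e$ lies in the relative interior of the arc $[a,z]$, so $[a,z]\setminus\{e\}$ splits into two connected components, one containing $a$ and one containing $z$; as $[a,z]\subset[F]$, this forces $\mathrm{ord}_{[F]}(e)\geq 2$, contradicting $e\in E([F])$. For the reverse inclusion under the hypothesis $F\subset E(X)$, the case $|F|=1$ is trivial; otherwise fix $z\in F$ and use that $\mathrm{ord}_X(z)=1$. Every connected component of $[F]\setminus\{z\}$ is contained in some connected component of $X\setminus\{z\}$, so $\mathrm{ord}_{[F]}(z)\leq \mathrm{ord}_X(z)=1$. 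Since $[F]$ is non-degenerate and connected, we cannot have $\mathrm{ord}_{[F]}(z)=0$, hence $\mathrm{ord}_{[F]}(z)=1$ and $z\in E([F])$.

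The main technical obstacle is the closedness of $C$, which really amounts to the continuous dependence of the arc $[u,v]$ on its endpoints in a dendrite. Once this ingredient is available, both bullets follow cleanly from the endpoint criterion in terms of the order at a point, combined with the fact that the order with respect to a subdendrite is dominated by the ambient order.
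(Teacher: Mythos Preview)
The paper does not supply its own proof of this lemma: it is quoted verbatim from \cite{Askri} and no argument follows the statement. Hence there is nothing in the present paper to compare your proposal against.

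On its own merits, your argument is sound and follows the natural route. Both inclusions in the first bullet are handled correctly, and you rightly isolate the only genuine difficulty: closedness of $C=\bigcup_{z\in F}[a,z]$. Your reduction to continuity of the arc map $(u,v)\mapsto[u,v]$ (in the Hausdorff metric) is the standard way to see this; that continuity is a classical fact for dendrites (it appears in Nadler's book), so you may simply cite it rather than sketch a derivation via tree approximations. For the second bullet, your contradiction argument for $E([F])\subset F$ is clean: if $e\in(a,z)$ then the unique arc in the dendrite $[F]$ from $a$ to $z$ passes through $e$, so $a$ and $z$ lie in distinct components of $[F]\setminus\{e\}$, giving $\mathrm{ord}_{[F]}(e)\geq 2$. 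The reverse inclusion under $F\subset E(X)$ via $\mathrm{ord}_{[F]}(z)\leq\mathrm{ord}_X(z)=1$ is also correct, and you handle the degenerate case $|F|=1$ explicitly. Nothing is missing.
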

We need also the following lemma stated with $E(X)'$ finite in \cite{Askri}. Its proof can be obtained in the similar manner as in \cite{Askri}. 
\begin{lem}[\cite{Askri}]\label{connected_components}
	Let $X$ be a dendrite such that $E(X)$ is closed. Let $Y \subset X$ a subdendrite of $X$ such that $E(Y) \cap E(X)' =\emptyset $. Then $Y$ is a tree and $X \backslash Y$ has finitely many connected components. Furthermore, there are pairwise disjoint subdendrites $D_1, \dots, D_n$ in $X$ such that $X \backslash Y  \subset \bigcup_{1 \leq i \leq n} D_i$ and $D_i \cap Y$ is reduced to one point for each $i$.
\end{lem}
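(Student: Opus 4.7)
The plan is to proceed in three stages. First I would establish that $Y$ is disjoint from $E(X)'$. The key observation is that any $y \in Y \cap E(X)$ has order $1$ in $X$, and since $Y \subseteq X$ any direction in $Y$ at $y$ is also a direction in $X$; provided $Y$ is non-degenerate this forces $y \in E(Y)$. Hence $Y \cap E(X) \subseteq E(Y)$, and combining with the hypothesis $E(Y) \cap E(X)' = \emptyset$ and the inclusion $E(X)' \subseteq E(X)$ (which uses $E(X)$ closed) yields $Y \cap E(X)' = \emptyset$. Since $Y$ is compact and the isolated endpoints $E(X) \setminus E(X)'$ are locally finite in the open set $X \setminus E(X)'$, we further conclude that $Y \cap E(X)$ is finite.

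Next I would show that $E(Y)$ is finite, arguing by contradiction. If $E(Y)$ has an accumulation point $p \in Y$, then since $Y \cap E(X)$ is finite we may pick distinct $y_n \in E(Y) \setminus E(X)$ with $y_n \to p$. Each such $y_n$ has order $1$ in $Y$ and order $\geq 2$ in $X$, so some component $C_n$ of $X \setminus Y$ is attached at $y_n$; the first-point map $r_Y$ being constant on components (Lemma 10.24 in \cite{Nadler}) shows $\overline{C_n} \cap Y = \{y_n\}$, so $\overline{C_n}$ is a non-degenerate subdendrite. Since $C_n$ is open in $X$, any endpoint of $\overline{C_n}$ lying in $C_n$ is automatically an endpoint of $X$, and such an endpoint $e_n$ exists because $\overline{C_n}$ has at least two endpoints. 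The crucial step is to show $\textrm{diam}(C_n) \to 0$: applying Theorem \ref{decomposition_nadler} with $\delta = \varepsilon/2$ produces a finite cover $\overline{U_1}, \dots, \overline{U_r}$ whose pairwise intersections form a finite set $J$ of junction points, and a standard connectedness argument shows that any connected subset of diameter $\geq \varepsilon$ must meet $J$; since the $C_n$ are pairwise disjoint, only finitely many can have diameter $\geq \varepsilon$. Hence $\textrm{diam}(C_n) \to 0$, so $e_n \to p$ and $p$ becomes an accumulation point of $E(X)$, putting $p \in E(X)'$ and contradicting $Y \cap E(X)' = \emptyset$.

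Finally, I would count components and assemble the subdendrites. Each component $C$ of $X \setminus Y$ has a unique attachment point $a(C) \in Y$, and an order comparison at $a(C)$ forces $a(C) \in B(X) \cup E(Y)$. The set $E(Y)$ is finite by the previous step, while $B(X) \cap Y$ is finite because \cite[Cor.\ 3.6]{Ar} gives $B(X)$ discrete in $X$ under the hypothesis that $E(X)$ is closed, and $Y$ is compact. Moreover each such attachment point $y$ has finite order in $X$: infinite order would produce, by the same shrinking argument applied to the components of $X \setminus \{y\}$, a sequence of endpoints of $X$ accumulating at $y$, forcing $y \in E(X)'$, which is excluded. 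Thus only finitely many components can attach at each attachment point, and $X \setminus Y$ has finitely many components. Setting $D_y := \{y\} \cup \bigcup_{a(C) = y} C$ produces a subdendrite meeting $Y$ exactly at $\{y\}$; distinct $D_y$'s are disjoint since any common point would either lie in $Y$, forcing the attachment points to coincide, or lie in a common component, contradicting disjointness of components. The main obstacle is the diameter-shrinking step, which uses Nadler's decomposition and is where the closed-$E(X)$ hypothesis is most crucially exploited.
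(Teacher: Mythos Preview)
The paper does not give its own proof of this lemma; it only remarks that the proof ``can be obtained in the similar manner as in \cite{Askri}'', where the case $E(X)'$ finite is treated. Your three-stage outline is the natural extension of that argument and is essentially correct: the reduction to $Y\cap E(X)'=\emptyset$, the null-sequence (diameter-shrinking) device producing endpoints of $X$ that accumulate in $Y$, and the assembly of the $D_y$ are all sound.

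There is, however, one gap in the justification in Stage~3. You assert that $B(X)\cap Y$ is finite ``because \cite[Cor.\ 3.6]{Ar} gives $B(X)$ discrete \ldots\ and $Y$ is compact.'' But a discrete subset of a compact space need not be finite unless it is also \emph{closed}: witness $\{1/n:n\geq 1\}$ inside $[0,1]$. What is missing is the observation that $\overline{B(X)}\setminus B(X)\subseteq E(X)'$ when $E(X)$ is closed (any point of order $\le 2$ lying outside $E(X)'$ has an open neighbourhood whose closure is an arc, hence contains no branch points); combined with $Y\cap E(X)'=\emptyset$ this makes $B(X)\cap Y$ closed in $Y$, and then discreteness gives finiteness. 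A cleaner route, avoiding this detour entirely, is to recycle your Stage~2 argument: if $X\setminus Y$ had infinitely many components $C_n$, your shrinking step yields $\mathrm{diam}(C_n)\to 0$; each $C_n$ contains some $e_n\in E(X)$; the attachment points $a(C_n)\in Y$ subconverge to some $q\in Y$; hence $e_n\to q$ along that subsequence and $q\in E(X)'\cap Y=\emptyset$, a contradiction. With either fix in place, the remainder of your Stage~3 (finite order at each attachment point, construction and disjointness of the $D_y$) goes through unchanged.
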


\section{Main results}\label{SIII}

The main result of this paper is the following.
\begin{thm}\label{main}Let $f$ be a dendrite map on $X$ with zero topological entropy for which $E(X)$ is closed and countable. Then, $f$ satisfy Sarnak M\"{o}bius disjointness \eqref{first}. 
\end{thm}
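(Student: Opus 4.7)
My plan is transfinite induction on the Cantor--Bendixson rank of the closed countable set $E(X)$. By the Cantor--Bendixson theorem, there is a countable ordinal $\alpha_0$ with $E(X)^{(\alpha_0)}=\emptyset$, and each derivative $E(X)^{(\alpha)}$ stays closed and countable. In the base case $E(X)'=\emptyset$, the set $E(X)$ is closed and discrete in the compact space $X$, hence finite, so $X$ is a tree; then $f$ is a zero-entropy graph map and the M\"{o}bius disjointness follows from the case already settled in \cite{ela-GH}.

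For the inductive step, I would fix $x\in X$, $\varphi\in C(X,\C)$ and $\eps>0$, choose $\delta>0$ so that $\varphi$ oscillates by less than $\eps$ on any set of diameter $<\delta$, and invoke Theorem~\ref{decomposition_nadler} to get pairwise disjoint connected open sets $U_1,\ldots,U_r$ of diameter $<\delta$ with $X=\bigcup_i\overline{U_i}$. For each $i$ I would pick a finite subtree $Y_i\subset\overline{U_i}$ containing every branch point of $\overline{U_i}$ together with a finite $\eps$-dense subset of $\overline{U_i}\cap E(X)'$. Applying Lemma~\ref{connected_components} to $Y_i$ inside the subdendrite $\overline{U_i}$ then decomposes $\overline{U_i}\setminus Y_i$ into finitely many subdendrites $D^{(i)}_{j}$, each attached to $Y_i$ at a single point; the choice of $Y_i$ makes the derived set $E(D^{(i)}_{j})\cap E(X)'$ strictly of lower Cantor--Bendixson rank than $E(X)$, so the inductive hypothesis applies to the dynamics pulled back to $D^{(i)}_{j}$ by the first-point retraction $r_{D^{(i)}_{j}}$.

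The arithmetic input enters next. I would split $S_N(x,\varphi)$ according to which piece of the partition the orbit point $f^n(x)$ lies in. On the finite skeleton $\bigcup_i Y_i$, the induced dynamics factors through a zero-entropy graph map and the contribution is $o(1)$ by \cite{ela-GH}. On each $D^{(i)}_{j}$, I expect the zero-entropy and dendrite structure to force the return-time set $\{n: f^n(x)\in D^{(i)}_{j}\}$ to be, up to a set of zero density, a finite union of arithmetic progressions $\{n\equiv a\pmod{q}\}$; the Dirichlet prime number theorem then gives $\frac{1}{N}\sum_{n\le N,\, n\equiv a\pmod{q}}\bmu(n)=o(1)$, and combining this with the inductive control of $\varphi$ on the lower-rank subdendrite $D^{(i)}_{j}$ yields an $o(1)+O(\eps)$ bound per piece. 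Summing over the finitely many $(i,j)$ and letting $\eps\to 0$ completes the induction step.

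The hard part, and precisely the gap in Theorem~4.10 of \cite{ela-GH}, is the case where some $\overline{U_{i_0}}$ does not meet $E(X)'$: there the Askri-type argument has no accumulation point of endpoints to anchor the return-time analysis. My proposed repair is to observe that in this situation $\overline{U_{i_0}}\cap E(X)$ is closed and has no accumulation point in a compact set, hence is finite, so $\overline{U_{i_0}}$ is itself a tree; combining this with the uniform approximation of the identity by retractions onto trees (\cite{Nadler}, Theorem~10.27), the contribution of visits to $\overline{U_{i_0}}$ reduces to a zero-entropy graph-map M\"{o}bius sum already covered by the base case. I expect this fix, together with the transfinite induction above, to deliver M\"{o}bius disjointness for every dendrite map with $E(X)$ closed and countable.
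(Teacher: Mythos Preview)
Your transfinite induction does not go through, and the failure is at the place where you invoke the inductive hypothesis. The hypothesis speaks of a dendrite map $g:Y\to Y$ with $E(Y)$ of lower Cantor--Bendixson rank, but $f$ does not restrict to a self-map of the subdendrites $D^{(i)}_j$: the orbit $\{f^n(x)\}$ enters and leaves each piece, and the composition $r_{D^{(i)}_j}\circ f$ is not dynamically related to these returns (its $n$-th iterate is not $r_{D^{(i)}_j}\circ f^n$). Consequently, M\"{o}bius disjointness for the retracted map tells you nothing about $\sum_{n\le N}\bmu(n)\varphi(f^n(x))\mathds{1}_{D^{(i)}_j}(f^n(x))$. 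The same objection applies to your repair of the gap in \cite[Theorem~4.10]{ela-GH}: that $\overline{U_{i_0}}$ is a tree is true, but there is no zero-entropy graph map on it whose M\"{o}bius sum equals the contribution of the $f$-orbit's visits to $\overline{U_{i_0}}$.

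The sentence ``I expect the zero-entropy and dendrite structure to force the return-time set $\{n:f^n(x)\in D^{(i)}_j\}$ to be, up to a set of zero density, a finite union of arithmetic progressions'' is precisely the statement that needs a proof, and nothing in your outline supplies one. In the paper this is obtained not by induction on the rank of $E(X)$ but by a structure theorem (Theorem~\ref{Askri}, the extension of the Smital--Ruette property): when $L=\omega_f(x)$ is uncountable one produces $f$-periodic subdendrites $D_k$ of periods $\alpha_k\to\infty$ with $L\subset\bigcup_{i<\alpha_k}f^i(D_k)$ and the $f^i(D_k)$ pairwise disjoint. The geometric work then consists in refining the Nadler decomposition $\{V_i\}$ into connected open pieces $b_j$ each of which satisfies $\textrm{card}(\partial b_j)\le 2$; this guarantees that at most two of the $\alpha_k$ disjoint sets $f^i(D_k)$ can straddle the boundary of a given $b_j$, so the step-function sum splits into an eventually periodic part (handled by DPNT via Proposition~\ref{periodic}) plus an error bounded by $2/\alpha_k$ via Lemma~\ref{holed}. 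The countable and finite $\omega$-limit cases are disposed of separately (tameness, respectively asymptotic periodicity). Your proposal bypasses Theorem~\ref{Askri} entirely, and without it there is no mechanism producing the arithmetic-progression behaviour you assume.
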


As a consequence, we have 

\begin{cor}[\cite{ela-GH}, Theorem 4.10.]Let $f$ be a dendrite map on $X$ with zero topological entropy for which $E(X)$ is closed and $E(X)'$ is finite. Then, $f$ satisfy Sarnak M\"{o}bius disjointness \eqref{first}. 
\end{cor}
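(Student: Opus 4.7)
The plan is to deduce the corollary directly from Theorem \ref{main}, by checking that the stronger-looking hypotheses of the corollary are in fact a special case of the hypotheses of Theorem \ref{main}. Concretely, I would show that if $E(X)$ is closed and its derived set $E(X)'$ is finite, then $E(X)$ itself is countable; once this is observed, Theorem \ref{main} applies verbatim and yields the conclusion.

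The key set-theoretic observation is the decomposition
\[
E(X) \;=\; E(X)' \;\cup\; \bigl(E(X)\setminus E(X)'\bigr),
\]
where, by the very definition of the derived set, every point of $E(X)\setminus E(X)'$ is isolated in $E(X)$. Since $X$ is a dendrite, it is a compact metric space, hence separable and second countable; the subspace $E(X)$ inherits second countability, and a second countable space can contain only countably many isolated points. Therefore $E(X)\setminus E(X)'$ is at most countable, and since $E(X)'$ is finite by assumption, $E(X)$ is the union of a finite set and a countable set, hence countable.

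With $E(X)$ now known to be closed and countable, and $f$ being a dendrite map on $X$ with zero topological entropy, the hypotheses of Theorem \ref{main} are satisfied. Applying Theorem \ref{main} gives that $f$ fulfills Sarnak's M\"obius disjointness \eqref{first}, which is exactly the conclusion of the corollary. There is no genuine obstacle here: the entire content of the corollary is absorbed into Theorem \ref{main}, and the corollary only records the particular case (previously established in \cite{ela-GH}, Theorem 4.10) in which the countability of $E(X)$ is guaranteed by the stronger finiteness assumption on $E(X)'$.
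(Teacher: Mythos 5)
Your proof is correct, and it reaches the same intermediate conclusion as the paper---namely that the hypotheses force $E(X)$ to be countable, so that Theorem \ref{main} applies---but by a genuinely different route. The paper argues by contraposition via the characterization theorem of Charatonik--Wright--Zafiridou (\cite[Theorem 5]{Cheratonik}): if $E(X)$ is closed and not countable then it is a Cantor set, hence perfect, so $E(X)'$ is infinite, contradicting the finiteness hypothesis. You instead give a purely point-set-topological argument: every point of $E(X)\setminus E(X)'$ is isolated in $E(X)$, a second countable space has only countably many isolated points, and the union of a finite set with a countable set is countable. Your argument is more elementary and more general---it uses nothing about dendrites, does not need $E(X)$ to be closed for the countability step, and in fact only requires $E(X)'$ to be countable rather than finite---whereas the paper's argument is shorter modulo the external reference and exploits the dichotomy (countable versus Cantor set) that is specific to closed endpoint sets of dendrites. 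Both are valid ways of absorbing the corollary into Theorem \ref{main}.
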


\begin{proof}By the characterization theorem of the dendrites with a countable set of end points \cite[Theorem 5]{Cheratonik}, if $E(X)$ is not countable then it is a Cantor set. Therefore $E(X)'$ is not finite. This finished the proof of the corollary.
\end{proof}
\medskip
\medskip

\noindent At this point, let us notice that recently, G. Askri \cite{Askri2} proved the following lemma. 

\begin{lem} Let $X$ be a dendrite with $End(X)$ countable and closed and
let $f : X \longrightarrow X$ be a continuous map. Suppose that $f$ has no Li-Yorke pairs.
If one of the following assumptions holds
\begin{enumerate}
\item the collection of minimal sets is closed in $(2^X , d_H )$,
\item $f|_{\Lambda(f)}$ is equicontinuous at every point in $End(X)'$ ,
\item $f |_{P (f )}$ is uniform equicontinuous,
\end{enumerate}
then $f|_{\Lambda(f)}$ is equicontinuous.
where $\Lambda(f)$ stand for the union of all $\omega$-limit sets a the set of fixed points, and $P(f)$ is the set of periodic points.
\end{lem}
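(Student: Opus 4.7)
\medskip
\noindent\textbf{Proof proposal.}
The plan is to exploit the no-Li-Yorke hypothesis, which restricts the possible proximal structure of $(\Lambda(f),f)$ to essentially asymptotic behaviour, and to combine it with the rigid geometry of a dendrite whose endpoint set is closed and countable. I write $\Lambda = \Lambda(f)$ throughout. First I would collect standing structural facts: $E(X)$ closed implies $B(X)$ is discrete by Arevalo's corollary recalled above; $E(X)$ countable and closed admits a Cantor–Bendixson stratification $E(X) \supset E(X)' \supset E(X)'' \supset \dots$ that terminates at some countable rank; Theorem \ref{decomposition_nadler} combined with Lemma \ref{connected_components} then produces, for any subdendrite $Y$ avoiding $E(X)'$, a decomposition of $X \setminus Y$ into finitely many subdendrites meeting $Y$ at one point each. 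The set $\Lambda$ is closed and $f$-invariant, and the hypothesis ``no Li-Yorke pair'' says that every proximal pair in $X \times X$ is asymptotic.

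Second, I would treat the three cases by reducing each to a common target: that $f|_\Lambda$ has every point as an equicontinuity point. Under hypothesis (3), uniform equicontinuity of $f|_{P(f)}$ extends continuously to $\overline{P(f)}$; using that every $\omega$-limit set contains an almost periodic point and that a minimal system without Li-Yorke pairs is equicontinuous (the classical Blanchard–Huang–Snoha-type dichotomy), one gets $\omega_f(x) \subset \overline{P(f)}$ for every $x$, and the uniform modulus transfers to $\Lambda$. Under hypothesis (2), the equicontinuity locus of $f|_\Lambda$ already contains $E(X)' \cap \Lambda$; I would argue that any failure point of equicontinuity must project through the retraction $r_Y$ (with $Y$ a suitable tree from the exhaustion $(Y_i)$ after Theorem 10.27 of Nadler) to a free arc, on which a non-asymptotic proximal pair would yield a Li-Yorke pair, contradicting the hypothesis. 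Under hypothesis (1), closedness of the minimal-set collection in $(2^X, d_H)$ forces the map $x \mapsto \overline{\mathrm{Orb}_f(x)}$ to be continuous on the set of almost periodic points; combined again with no-Li-Yorke and minimality arguments, this yields equicontinuity of $f$ on each minimal set and then across $\Lambda$ by the dendrite decomposition.

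Third, to upgrade ``every point of $\Lambda$ is an equicontinuity point of $f|_\Lambda$'' to \emph{uniform} equicontinuity on $\Lambda$, I would argue by contradiction. A failure produces sequences $x_n, y_n \in \Lambda$ with $d(x_n,y_n) \to 0$ and $d(f^{k_n}(x_n), f^{k_n}(y_n)) \ge \varepsilon$ for some $\varepsilon>0$ and $k_n \to \infty$; extracting limits along the arc $[x_n,y_n]$ and using the first-point retractions $r_{Y_i}$ to the tree exhaustion, I would locate a proximal pair at the limit that is not asymptotic, contradicting the hypothesis. The main obstacle is this final step: making the contradiction uniform across the transfinite Cantor–Bendixson stratification of $E(X)$, because the arc-stretching construction that produces a Li-Yorke pair on a free arc has to be carried out inductively on the rank of the strata of $E(X)$, and the induction step relies on showing that the ``bad'' locus descends in rank at each stage. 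This geometric induction, rather than the soft dynamical steps, is where the heart of Askri's argument in \cite{Askri2} lies.
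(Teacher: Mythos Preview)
The paper does not prove this lemma: it is stated as a quotation of Askri's result from \cite{Askri2}, with no argument supplied here. There is therefore no ``paper's own proof'' against which to compare your proposal.

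As for the proposal itself, it is a plausible high-level outline but not a proof, and several of the steps you gesture at are precisely the nontrivial content of Askri's paper. In particular: the assertion that under (3) one has $\omega_f(x)\subset\overline{P(f)}$ for every $x$ is not obvious in the dendrite setting and requires the structural analysis of $\omega$-limit sets carried out there; the reduction under (2) to ``a failure point must project via $r_Y$ to a free arc and produce a Li-Yorke pair'' is exactly the delicate part and your sketch does not explain how the non-asymptotic proximal pair is actually exhibited; and for (1) the passage from Hausdorff-closedness of the minimal-set family to equicontinuity on $\Lambda$ is again the substance of the result, not a routine consequence. Your final paragraph candidly acknowledges that the transfinite induction on the Cantor--Bendixson rank of $E(X)$ is where the real work lies, which is accurate --- but that means what you have written is an outline of where the difficulties are, rather than a proof.
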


\noindent It follows that if the dendrite map has no Li-Yorke pairs and one of the previous assumptions holds the M\"{o}bius disjointness property is fulfilled. This gives an alternative proof to the proof of D. Karagulan \cite{Ka}.
We further notice that there exist a dendrite map without Li-York pair such that   $\Lambda(f)=P(f)$ and he collection of minimal sets is not closed in $(2^X , d_H)$. Therefore, the restriction of $f$ to $\Lambda(f)$ is not equicontinuous (see \cite[Lemma 4]{Askri2}).
\medskip
\medskip
\
noindent{}For the proof of our main result (Theorem \ref{main}), we need to extend Smital-Ruette property. This later property is implicitly contained in several papers of Sharkovsky, but its first proof can be found in \cite{smital} and \cite{ruette}. G. Askri extended this property to the class of dendrites for which the set of endpoints is closed and its derivative set is finite.  Here, we extend it to the dendrites with closed and countable endpoints.  Our extension is largely inspired from that of G. Askri.  We state it as follows.
\begin{thm}\label{Askri}
	Let X be a dendrite such that $E(X)$ is closed and countable. Let $f: X \to X$ be a dendrite map with zero topological entropy. Let $L:= \omega_f(x)$ be an uncountable $\omega$-limit set. Then there is a sequence of $f$-periodic subdendrites $(D_k)_{k \geq 1}$ and a sequence $(n_k)_{k \geq 1}$ of integers with the following properties for any $k$,
	\begin{enumerate}
		\item $D_k$ has period $\alpha_k := n_1 n_2 \dots n_k$.
		\item For $i \neq j \in \{0,\dots,,\alpha_k-1\}, f^i(D_k)\cap f^j(D_k)=\emptyset$.
		\item $ L \subset \bigcup _{i=0}^{\alpha_k -1}f^i(D_k)$.
		\item $\bigcup_{k=0}^{n_j-1}f^{k\alpha_{j-1}}(D_j)\subset D_{j-1}$.
		\item For $ i \in \{0,\dots,\alpha_k-1\}, f(L\cap f^i(D_k))=L\cap f^{i+1}(D_k)$.
	\end{enumerate} 
\end{thm}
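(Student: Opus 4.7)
The plan is to proceed by induction on $k$, constructing at each stage a proper periodic subdendrite $D_k\subset D_{k-1}$ whose $f$-iterates cyclically permute a finite covering of $L$. The overall strategy follows that of \cite{Askri}, with two key modifications: (i) accommodating countable (rather than finite) $E(X)'$, and (ii) reinforcing the use of Lemma \ref{connected_components} to fit this weaker hypothesis. At every stage the assumption $h(f)=0$ serves to exclude non-cyclic combinatorial behavior.

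For the base case $k=1$, I would first apply Theorem \ref{decomposition_nadler} with a small $\delta>0$ to obtain a decomposition $X=\overline{U_1}\cup\cdots\cup\overline{U_r}$ into connected open pieces of diameter less than $\delta$. Note that $L$ is perfect, since an isolated point of an $\omega$-limit set is necessarily periodic and $L$ is uncountable. Let $I=\{i:L\cap U_i\neq\emptyset\}$ and, after choosing $\delta$ smaller than the Lebesgue number of $\{U_i\}$ relative to the modulus of uniform continuity of $f$, define a combinatorial map $\sigma:I\to I$ by setting $\sigma(i)=j$ whenever $f(L\cap\overline{U_i})\subset\overline{U_j}$. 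The hypothesis $h(f)=0$ forces $\sigma$ to be a cyclic permutation of its image: otherwise, after passing to a suitable power of $f$, one would construct a Markov-type horseshoe inside $L$, producing positive entropy, a contradiction. Let $n_1$ denote the length of the cycle and set $D_1:=[L\cap\overline{U_{i_0}}]$ for some $i_0$ in the cycle. By Lemma \ref{dendrites_convexhull}, $D_1$ is a subdendrite with $E(D_1)\subset L$, and properties (1)--(3) and (5) at level $k=1$ follow from the cyclic structure together with the $f$-invariance $f(L)=L$.

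The inductive step applies the base case to the induced system $(D_k,f^{\alpha_k}|_{D_k})$, which still has zero topological entropy since $h(f^{\alpha_k}|_{D_k})\leq \alpha_k\cdot h(f)=0$. Its uncountable $\omega$-limit set is $L\cap D_k$, uncountable because $L\subset\bigcup_{i=0}^{\alpha_k-1}f^i(D_k)$ is a finite union of closed sets. The base case then produces $D_{k+1}$, $n_{k+1}$, and $\alpha_{k+1}=n_{k+1}\alpha_k$; properties (4) and (5) at level $k+1$ are immediate from $D_{k+1}\subset D_k$ and the cyclic permutation structure of $f^{\alpha_k}|_{D_k}$.

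The main obstacle, and the place where the argument departs substantively from \cite{Askri}, is the construction of $D_1$ when $E(X)'$ is only closed and countable rather than finite. In \cite{Askri}, one relies on $E(D_1)\cap E(X)'=\emptyset$ to invoke Lemma \ref{connected_components} and ensure that $X\setminus D_1$ has finitely many components. Under our weaker hypothesis this need not hold a priori, and it is precisely here that the gap in \cite[Theorem 4.10]{ela-GH} arises. To bridge it, I would choose the Nadler decomposition so that every glue point $\overline{U_i}\cap\overline{U_j}$ avoids the countable set $E(X)\cup E(X)'$: this is possible because Proposition \ref{Cut-dense} guarantees that order-$2$ points are continuumwise dense in $X$, so the construction behind Theorem \ref{decomposition_nadler} can be performed with all cut points chosen in the complement of this countable set. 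With such a refined decomposition, $E(D_1)=E([L\cap\overline{U_{i_0}}])$ lies in $L\cup\bigl(\overline{U_{i_0}}\cap\bigcup_{j\neq i_0}\overline{U_j}\bigr)$ and one verifies $E(D_1)\cap E(X)'=\emptyset$. Lemma \ref{connected_components} then applies and provides the finite control of $X\setminus D_1$ needed to iterate the construction.
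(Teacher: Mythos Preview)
Your proposal diverges substantially from the paper's proof and contains real gaps. The paper (following Askri) does \emph{not} use the Nadler decomposition of Theorem \ref{decomposition_nadler} to build $D_1$. Instead, it works inside $M=[L]$, picks a fixed point $a\in M$, forms the backward-invariant set $F_a=\bigcup_{n\ge 0}f^{-n}(a)$ and its convex hull $Y_a=[F_a]$, and shows (via horseshoe arguments, which is where $h(f)=0$ is actually used) that $L\cap\overline{F_a}=\emptyset$. Since $E(Y_a)\subset\overline{F_a}$ and $E(M)'\subset L$, this gives $E(Y_a)\cap E(M)'=\emptyset$, so Lemma \ref{connected_components} applies to $Y_a$. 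The finitely many components $C_1,\dots,C_n$ of $M\setminus Y_a$ meeting $L$ are then shown to be permuted cyclically, and $J$ is built as the increasing union $\bigcup_{k\ge 0}f^{kn}([l_1])$ precisely so that $f^n(J)=J$.

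Your Nadler-based route breaks at several points. First, the map $\sigma$ is not well-defined: the pieces $\overline{U_i}$ meet only at single points, so even if $\textrm{diam}\,f(L\cap\overline{U_i})$ is small there is no reason it lies in a single $\overline{U_j}$; a ``Lebesgue number'' argument needs an open cover with overlap, which Theorem \ref{decomposition_nadler} does not provide. Second, your $D_1=[L\cap\overline{U_{i_0}}]$ is not $f^{n_1}$-invariant; nothing in the construction forces $f^{n_1}(D_1)\subset D_1$, and the paper's union-of-iterates step is exactly what supplies this. Third, the claim that $h(f)=0$ forces $\sigma$ to be a single cycle is asserted but not argued; in the paper zero entropy enters only through the exclusion of concrete horseshoes in Lemma \ref{horsehoe_dendrites}, not through combinatorics of an abstract $\sigma$. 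Finally, your repair of Lemma \ref{connected_components} fails: by Lemma \ref{dendrites_convexhull} one has $E(D_1)=E([L\cap\overline{U_{i_0}}])\subset L\cap\overline{U_{i_0}}\subset L$, and nothing prevents $L\cap E(X)'\neq\emptyset$, so choosing the cut points of the Nadler decomposition off $E(X)'$ does not help. The paper avoids this entirely because it applies Lemma \ref{connected_components} to $Y_a$, whose endpoints lie in $\overline{F_a}$ and are therefore disjoint from $L\supset E(M)'$ by the horseshoe argument.
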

\noindent For sake of completeness, we present a sketch of the proof of Theorem \ref{Askri}. For that, we proceed by induction and as in the proof of Smital-Ruelle property, we are going to prove the result at step $k=1$ and then extend it by induction. 

\begin{prop}\label{askri_step1}
	With the same assumptions as in Theorem \ref{Askri}, there exists a connected subset $J$ of $X$ and an interger $n \geq 2$ such that
	\begin{enumerate}[label = (\roman*)]
		\item $J, f(J), \dots f^{n-1}(J)$ are pairwise disjoint.
		\item $f^n(J)=J$.
		\item $ L \subset \bigcup_{0\leq i \leq n-1} f^i(J)$.
		\item For $ i \in \{0,\dots,n-1\}, f(L\cap f^i(J))=L\cap f^{i+1}(J)$.
	\end{enumerate}
\end{prop}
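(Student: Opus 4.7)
The plan is to extend G.~Askri's argument for finite $E(X)'$ to the present setting in which $E(X)$ is closed and countable. I work inside the subdendrite $Y:=[L]$; by Lemma~\ref{dendrites_convexhull} we have $E(Y)\subset L$, and since $L$ is an $\omega$-limit set $f(L)=L$, so $f(Y)\supset Y$.

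\emph{Step~1 (periodic decomposition).} Since $E(X)$ is countable, so is its derivative $E(X)'$, which is closed in $X$ and nowhere dense in $Y$. Using Proposition~\ref{Cut-dense}, pick a proper finite subtree $Y'\subsetneq Y$ with $E(Y')\cap E(X)'=\emptyset$. By Lemma~\ref{connected_components} applied to $Y'$ inside $X$, there exist finitely many pairwise disjoint subdendrites $D_1,\dots,D_m$ with $X\setminus Y'\subset \bigcup_i D_i$ and $D_i\cap Y'$ a single point. Since $Y'$ is a proper subtree of $Y=[L]$ we have $L\not\subset Y'$, so at least one $D_i$ meets $L$. Encode each $y\in L$ by its itinerary $\phi(y)_k\in\{0,1,\dots,m\}$, putting $\phi(y)_k=0$ when $f^k(y)\in Y'$ and $\phi(y)_k=i$ when $f^k(y)\in D_i\setminus Y'$. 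The zero topological entropy of $f$ forbids $\phi(L)$ from being a positive-entropy subshift on these $m+1$ symbols, so the combinatorics of $f$ on $\{D_1,\dots,D_m\}$ is eventually periodic with some period $n$; uncountability of $L$ forces $n\geq 2$. Choosing an index $i$ with $D_i\cap L$ infinite and setting $D:=\bigcap_{k\geq 0}f^{kn}(D_i)$ produces an $f^n$-invariant subdendrite (as a nested intersection of subcontinua).

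\emph{Step~2 (verifying (i)--(iv)).} Put $J:=D$. Property (ii) $f^n(J)=J$ is built into the construction. Property (i), pairwise disjointness of $J,f(J),\dots,f^{n-1}(J)$, follows from the minimality of $n$ in Step~1 together with the pairwise disjointness of the $D_j$'s. Property (iii) follows from $L=f^k(L)$ and the fact that the pieces $L\cap D_i$ cycle through the subdendrites $f^j(D)$ under $f$. Property (iv) is then a direct consequence of $f(L)=L$ combined with (i).

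The hard part is Step~1: producing the period $n\geq 2$ subdendrite while keeping Lemma~\ref{connected_components} applicable. In Askri's finite-$E(X)'$ argument, the subtree $Y'$ is built from the finitely many points of $E(X)'$ directly; in the countable case one must use closedness of $E(X)$ together with a Baire-type argument to produce $Y'$ avoiding $E(X)'$, then leverage zero entropy to upgrade the symbolic eventual periodicity into an actual periodic subdendrite of period $\geq 2$. This is precisely the step that fixes the gap in the proof of Theorem~4.10 of \cite{ela-GH} mentioned in the introduction.
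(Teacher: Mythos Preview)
Your Step~1 does not go through, and the gap is not cosmetic. The subtree $Y'$ you choose has no dynamical meaning, so there is no reason for $f$ to carry each $D_i$ into a single $D_j$; consequently the ``combinatorics of $f$ on $\{D_1,\dots,D_m\}$'' is not a well-defined self-map of a finite set, and your itinerary map $\phi$ does not semiconjugate $f|_L$ to a subshift in any useful way. Even granting a coding, zero topological entropy does \emph{not} force a subshift to be eventually periodic (Sturmian systems, Toeplitz subshifts, odometer factors, etc., are all zero-entropy and uncountable), so the deduction ``zero entropy $\Rightarrow$ period $n$'' is false as stated. Finally, nothing makes $(f^{kn}(D_i))_k$ a nested sequence, so your $D$ need not be a continuum and $f^n(D)=D$ is unjustified.

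The paper's argument is structurally different and is where the countable-$E(X)$ hypothesis actually enters. One works in $M=[L]$, takes a fixed point $a\in M$, and sets $F_a=\bigcup_{n\geq 0}f^{-n}(a)$ and $Y_a=[F_a]$. Horseshoe arguments (Lemma~\ref{horsehoe_dendrites}) force $L\cap\overline{F_a}=\emptyset$, hence $L\subset M\setminus Y_a$. The crucial point is that $E(Y_a)\subset\overline{F_a}$ while $E(M)'\subset L$, so $E(Y_a)\cap E(M)'=\emptyset$ and Lemma~\ref{connected_components} yields finitely many components $C_1,\dots,C_n$ of $M\setminus F_a$ meeting $L$. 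Because $F_a$ is \emph{backward} $f$-invariant, $f$ induces a genuine map $\sigma$ on $\{1,\dots,n\}$, and $f(L)=L$ forces $\sigma$ to be an $n$-cycle; here $n\geq 2$ since $\mathrm{ord}_M(a)\geq 2$. One then takes $J=\bigcup_{k\geq 0}f^{kn}([l_1])$, an \emph{increasing} union (because $l_1\subset f^n(l_1)$ would fail, but $[l_1]\subset f^n([l_1])$ holds via $l_1=f^n(l_1)$), and pairwise disjointness of the $f^i(J)$ follows from maximality of the $C_j$ together with $f^i(J)\cap F_a=\emptyset$. The dynamical object $Y_a$, not an arbitrary tree $Y'$, is what makes the finite cyclic picture emerge.
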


\noindent The proof of Proposition \ref{askri_step1} relies on the following Lemma.

\begin{lem}\label{dendrite_periodic}
	For $x\in X$, if $\omega(f)(x)=L$ is infinite then it contains no periodic point of $f$.
\end{lem}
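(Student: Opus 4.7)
The plan is to argue by contradiction. Assume $L = \omega_f(x)$ is infinite and contains a periodic point $p$ of some period $n$. The first step is the standard reduction to the case of a fixed point: setting $g = f^n$, we have $h(g) = n\,h(f) = 0$, and $p$ becomes a fixed point of $g$. Passing to a subsequence of $f^{m_k}(x) \to p$, we may fix the residue $r := m_k \bmod n$; then $p \in \omega_g(y)$, where $y := f^r(x)$. Since $L = \bigcup_{i=0}^{n-1} \omega_g(f^i(x))$ and $f$ cyclically permutes these $n$ sets, they all have the same cardinality, so $\omega_g(y)$ is infinite. The problem reduces to showing that a zero-entropy dendrite map $g$ on $X$ cannot have an infinite $\omega$-limit set that contains a fixed point of $g$.

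Granting the reduction, pick $q \in \omega_g(y) \setminus \{p\}$ and consider the unique arc $[p, q]$. Infinitely many iterates of $y$ lie in a small neighborhood of $p$, while infinitely many lie near $q$, so the orbit $\{g^k(y)\}_k$ must cross any intermediate cut point of $[p, q]$ infinitely often. Using Proposition \ref{Cut-dense}, pick a point $a \in (p, q)$ of order two in $X$. The idea is to exploit this infinite oscillation to construct a topological horseshoe for some iterate $g^N$: by continuity of $g$ at the fixed point $p$, one expects to find two sub-arcs $J_1, J_2 \subset [p, q]$ with disjoint interiors and an $N \in \mathbb{N}$ such that $g^N(J_i) \supset J_1 \cup J_2$ for $i = 1, 2$, a configuration yielding positive topological entropy and contradicting $h(g) = 0$.

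The main obstacle is implementing the horseshoe argument in the dendrite setting. Unlike the interval case, we cannot use a linear order on the ambient space to parameterize arcs and their images, so we rely on the retraction $r_{[p,q]}\colon X \to [p,q]$ to project the orbit to the arc and carefully track how the connected components of $X \setminus \{a\}$ are permuted by $g$. The hypothesis that $E(X)$ is closed and countable is crucial here: as recalled after Lemma \ref{dendrites_convexhull}, it implies that $B(X)$ is discrete, so the local structure of $X$ near an order-two point of $[p, q]$ is arc-like, reducing the horseshoe construction to an essentially one-dimensional combinatorial argument that parallels the classical Misiurewicz--Smital proof for interval maps.
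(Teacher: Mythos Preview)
Your overall strategy---reduce to a fixed point and then manufacture a horseshoe to contradict zero entropy---is exactly the line the paper follows (it says so explicitly just before Lemma~\ref{horsehoe_dendrites}). The reduction in your first paragraph is fine.

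The gap is in the second and third paragraphs: the horseshoe is asserted, not built. Writing ``one expects to find two sub-arcs $J_1,J_2\subset[p,q]$ \dots\ such that $g^N(J_i)\supset J_1\cup J_2$'' is precisely the step that does \emph{not} transfer from the interval to a dendrite. Even if $a\in(p,q)$ has order two in $X$ and $B(X)$ is discrete, the image $g^N(J_i)$ is in general a subdendrite of $X$, not a sub-arc of $[p,q]$; projecting by $r_{[p,q]}$ destroys the Markov covering property you need, because $r_{[p,q]}\circ g^N$ is not an iterate of anything. So the ``essentially one-dimensional combinatorial argument'' you appeal to is not available, and discreteness of $B(X)$ does not rescue it. You have identified the obstacle correctly but not removed it.

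The paper's route (inherited from Askri) is different in its mechanics: instead of looking for sub-arcs of a single arc $[p,q]$, it works inside the convex hull $M=[L]$ and uses arcs of the form $[a,f^k(x)]$ anchored at the fixed point $a$. The key technical device is the inclusion statement of Lemma~\ref{horsehoe_dendrites}: for any $y\in L$ one finds $p,k$ with $[a,f^k(x)]\subset[a,f^p(y)]$. That inclusion of anchored arcs is what replaces the interval covering relation and is what actually produces the horseshoe in the dendrite setting; its proof exploits the decomposition of $X\setminus(B(X)\cup E(X))$ into countably many free open arcs and the fact that an uncountable $\omega$-limit set must accumulate inside one of them. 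Your sketch does not reach this mechanism, and without it the contradiction is not obtained.
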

\noindent{}As in the proof on the interval, we are going to build horseshoes to get contradictions. the following lemma will be useful.
\begin{lem}\label{horsehoe_dendrites}
	Let $f: X \to X$ a dendrite map such that $E(X)$ is closed and countable. Let $a$ a fixed point for $f$ and $L:=\omega_f(x)$ an uncountable $\omega$-limit set such that $L \cap P(f)= \emptyset$ then for any $y \in L$, there is $p,k \geq 0$ such that $[a,f^k(x)]\subset[a,f^p(y)]$.
\end{lem}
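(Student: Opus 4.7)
I will argue by contradiction. Suppose $y \in L$ is such that $f^k(x) \notin [a, f^p(y)]$ for all $k, p \geq 0$. Since $a$ is fixed and $L \cap P(f) = \emptyset$, we have $y \neq a$; by Lemma~\ref{dendrite_periodic}, the iterates $\{f^p(y)\}_{p \geq 0}$ are pairwise distinct. Choose $n_i \to \infty$ with $f^{n_i}(x) \to y$; continuity then gives $f^{n_i + p}(x) \to f^p(y)$ for every fixed $p \geq 0$. The main idea is to analyse, for each $p$, how $f^{n_i + p}(x)$ can approach $f^p(y)$ without ever landing on the arc $[a, f^p(y)]$, and then to use the countability of $E(X)$ to reach an impossibility.

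Let $r_p : X \to [a, f^p(y)]$ be the first-point retraction and set $b_{i,p} = r_p(f^{n_i+p}(x))$, so $b_{i,p} \to f^p(y)$ by continuity. Because $f^{n_i+p}(x) \notin [a, f^p(y)]$, we have $b_{i,p} \neq f^{n_i+p}(x)$. For each fixed $p$ one of two cases holds: either (A$_p$) $b_{i,p} = f^p(y)$ for infinitely many $i$, in which case $f^p(y)$ separates $a$ from $f^{n_i+p}(x)$ in $X$; or (B$_p$) $b_{i,p} \in [a, f^p(y))$ for infinitely many $i$, in which case these $b_{i,p}$ are branch points of $X$ accumulating to $f^p(y)$, and hence $f^p(y) \in \overline{B(X)}$.

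The next step is to eliminate case (B$_p$) for most values of $p$. Since $E(X)$ is closed and countable, its derived set $E(X)'$ is also countable, so the infinite forward orbit of $y$ contains infinitely many iterates $f^p(y) \notin E(X)'$. For any such $p$ (handling the analogous issue at $a$ by a similar reduction if needed), Lemma~\ref{connected_components} applied to $Y_p = [a, f^p(y)]$ yields finitely many pairwise disjoint subdendrites $D_1^{(p)}, \dots, D_{m(p)}^{(p)}$ whose union contains $X \setminus Y_p$, and each of which meets $Y_p$ in exactly one point. A pigeonhole argument places infinitely many $f^{n_i+p}(x)$ in a single $D_j^{(p)}$; closedness of $D_j^{(p)}$ together with $f^{n_i+p}(x) \to f^p(y)$ forces $f^p(y) \in D_j^{(p)}$, so $D_j^{(p)} \cap Y_p = \{f^p(y)\}$ and case (A$_p$) prevails. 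Thus (A$_p$) holds for infinitely many $p$.

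The main obstacle is the final step: extracting a contradiction from the persistence of (A$_p$). Writing $u \preceq_a v$ for $u \in [a, v]$, case (A$_p$) asserts $f^p(y) \prec_a f^{n_i+p}(x)$ for infinitely many $i$, so the orbit of $x$ perpetually overshoots the orbit of $y$ with respect to $\preceq_a$. I would then track how this ``overshoot'' relation behaves under iteration of $f$, and combine it with the uncountability of $L$ (which, via the Cantor--Bendixson analysis of $E(X)$, furnishes non-periodic points of $L$ in every neighborhood of $y$) to force an infinite, non-terminating branching structure in $X$ escaping to a single $\omega$-limit point. Such a structure is incompatible with the non-periodicity of $y$ (Lemma~\ref{dendrite_periodic}) and with the finite local branching provided by Lemma~\ref{connected_components} at points outside $E(X)'$. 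The delicate aspect is uniformising the contradiction across the countable set of iterates $p$ for which $f^p(y) \in E(X)'$; this is expected to proceed by transfinite induction on the Cantor--Bendixson rank of $E(X)$, extending Askri's argument in \cite{Askri} for the case when $E(X)'$ is finite.
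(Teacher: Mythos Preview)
Your proposal has a genuine gap: the final step is never carried out. You explicitly defer the contradiction to a ``transfinite induction on the Cantor--Bendixson rank of $E(X)$'' without indicating what quantity decreases or why the induction terminates. The earlier dichotomy (A$_p$)/(B$_p$) is fine, but nothing in your outline explains how the persistence of (A$_p$) --- the orbit of $x$ ``overshooting'' $f^p(y)$ --- produces an impossible branching structure. In a dendrite there is no obstruction to having $f^p(y) \prec_a f^{n_i+p}(x)$ for infinitely many $i$ and infinitely many $p$; this is exactly what happens in the interval case, and there is no contradiction unless you bring in additional information.

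The paper's argument is direct and much shorter, and hinges on an idea you did not use: pass from the orbit of $x$ to $\omega_f(y)$. Since $L \cap P(f)=\emptyset$, any minimal subset of $\omega_f(y)$ is infinite and perfect, hence $\omega_f(y)$ is uncountable. Now, because $B(X)$ is countable and $E(X)$ is countable by hypothesis, the set $X\setminus(B(X)\cup E(X))$ of regular points splits into \emph{countably many} open free arcs $(C_i)_{i\in\N}$. By cardinality, some $C_j$ meets $\omega_f(y)$ in uncountably many points; pick $u,v\in\omega_f(y)\cap C_j$ with $u\in(a,v)$ (possible since $C_j$ is a free arc, so the order from $a$ is linear there). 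Choose small disjoint subarcs $I_u\ni u$, $I_v\ni v$ in $C_j$. Then there are $p,k'$ with $f^p(y)\in I_v$ and $f^{k'}(y)\in I_u$; since $f^{k'}(y)\in L=\omega_f(x)$, some $f^k(x)\in I_u$ as well. The linear structure of the free arc $C_j$ immediately gives $[a,f^k(x)]\subset[a,f^p(y)]$.

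The point you missed is precisely this reduction to a free arc: once uncountably many points of $\omega_f(y)$ sit inside a single free arc, the problem becomes one-dimensional and the inclusion is immediate. No retraction analysis, case splitting at branch points, or transfinite induction is needed.
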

\begin{proof}
	Let $ y \in L$. We show first that $\omega_f(y)\subset\omega_f(x)$ is uncountable.
	Since it is closed and invariant, we can consider a subset $K \subset \omega_f(y)$ which is minimal. Since $L \cap P(f) = \emptyset$, it contains no periodic point, so it is infinite and with no isolated point, hence it is uncountable and so it is for $\omega_f(y)$.\newline
	Denote now by $(C_i)_{i \in \mathbb{N}}$ the sequence of connected components of $X \backslash (B(X) \cup E(X)))$. By Lemma \ref{connected_components}, each $C_i$ is an open free arc in $X$. There is $j \in \mathbb{N}$ such that $\omega_f(y) \cap C_j$ is uncountable. Let $v,v$ in this intersection such that $u \in (a,v)$. Let $I_u, I_v$ two open dsijoint arcs in $C_j$ such that $u \in I_u, v\in I_v$. Then there exist $p,k' \in \mathbb{N}$ such that $f^p(y) \in I_v$ and $f^{k'}(y) \in I_u$. but since $f^{k'}(y) \in L$, there is $k \in \mathbb{N}$ such that $f^k(x) \in I_u$ which leads to the inclusion $[a, f^k(x)]\subset [a,f^p(y)]$.
\end{proof}
\medskip
\medskip

\begin{proof}[\textbf{Proof of Lemma \ref{dendrite_periodic}.}]
	Let $M= [L]$ the convex hull of $L$. We only focus on the case $M \cap Fix(f) \neq \emptyset $. Let $a \in M \cap Fix(f)$. By the Theorem \ref{dendrite_periodic}, we have that $a \in M \backslash L \subset M \backslash E(M)$ by Lemma \ref{connected_components}. Hence, $ord(a,M) \geq 2$.
\end{proof}
	
\noindent	The proof of the first step $k=1$ for the Smital-Ruelle property is basic since the jumping dynamic from one side of the fixed point to the other can be described. But, here since each branch connected to the fixed point $a$ is also a dendrite complicates the proof. Indeed the natural idea would be to set $\omega_i = \omega_f(x) \cap C_i$ with $\{C_i\}_{i=1,\dots,p}$ the connected components of $X \backslash \{a\}$ and then to apply  the same reasoning as in the proof of the first step of the Smital-Ruelle property. It would be then very complicated to prove that $a \notin J$. We need then to take apart the preimages of $a$, construct sets in the same way we constructed $\omega_0$ and $\omega_1$, but such that they are disjoint from these preimages and then construct the set $J$.
\medskip
\medskip	
	Let us set $F_a = \bigcup_{n \geq 0} f^{-n}(a)$ and $Y_a= [F_a]$. We thus have the following lemma.
	\begin{lem}
		$L \subset M \backslash Y_a$ and $M\backslash Y_a$ has finitely many connected components.
	\end{lem}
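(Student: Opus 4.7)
The plan is to handle the two assertions in sequence: first the disjointness $L\cap Y_a=\emptyset$, then the finiteness of connected components of $M\setminus Y_a$.

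For the disjointness, I would begin with the weaker statement $L\cap F_a=\emptyset$. If $y\in L\cap F_a$ then $f^n(y)=a$ for some $n\geq 0$; since $L$ is an $\omega$-limit set on a compact space we have $f(L)=L$, so this forces $a\in L\cap\mathrm{Fix}(f)$, contradicting Lemma \ref{dendrite_periodic}. To upgrade to $L\cap Y_a=\emptyset$, I would use Lemma \ref{dendrites_convexhull}: since $a\in F_a$, $Y_a=\bigcup_{z\in F_a}[a,z]$ and $E(Y_a)\subseteq F_a$. Thus any candidate $y\in L\cap Y_a$ would necessarily be a non-endpoint of $Y_a$ lying in the interior of an arc $[a,z]\subset Y_a$. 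I would derive a contradiction by invoking the horseshoe Lemma \ref{horsehoe_dendrites}: for this $y$ we obtain indices $p,k$ with $[a,f^k(x)]\subseteq[a,f^p(y)]$. Iterating this construction along the countable arc structure of $Y_a$, while using the uncountability of $L$ and the zero-topological-entropy hypothesis, should yield either a periodic orbit inside $L$ (contradicting Lemma \ref{dendrite_periodic}) or a horseshoe producing positive entropy.

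For the finiteness of components, I would apply Lemma \ref{connected_components} to the subdendrite $Y_a$ inside $M$. The required hypothesis $E(Y_a)\cap E(M)'=\emptyset$ follows by combining $E(Y_a)\subseteq F_a$ (from Lemma \ref{dendrites_convexhull}) with $E(M)\subseteq L$ (same lemma), which gives $E(M)'\subseteq L$ by closedness of $L$, and then with the already-established $F_a\cap L=\emptyset$. Lemma \ref{connected_components} then delivers both that $Y_a$ is a tree and that there are pairwise disjoint subdendrites $D_1,\dots,D_n$ of $M$ covering $M\setminus Y_a$, each meeting $Y_a$ in a single point; the traces $D_i\setminus Y_a$ are precisely the connected components of $M\setminus Y_a$, so there are only finitely many.

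The main obstacle is the upgrade from $L\cap F_a=\emptyset$ to $L\cap Y_a=\emptyset$: interior points of the arcs $[a,z]\subseteq Y_a$ are not eventual preimages of $a$ and so cannot be excluded from $L$ by invariance alone. The full dendrite-horseshoe machinery of Lemma \ref{horsehoe_dendrites}, together with the countable endpoint hypothesis and zero entropy, must be brought to bear to rule them out. The finite-component part is then comparatively routine.
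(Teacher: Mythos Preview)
Your overall plan matches the paper's: use the horseshoe Lemma~\ref{horsehoe_dendrites} for $L\cap Y_a=\emptyset$, then verify the hypothesis of Lemma~\ref{connected_components} via Lemma~\ref{dendrites_convexhull} for the finiteness of components. The second part is exactly as you describe, with one minor caveat: Lemma~\ref{dendrites_convexhull} requires a closed set, so what it actually yields is $E(Y_a)\subseteq\overline{F_a}$ rather than $E(Y_a)\subseteq F_a$. The paper accordingly notes that in fact $L\cap\overline{F_a}=\emptyset$; but once $L\cap Y_a=\emptyset$ is established this closes itself, since $\overline{F_a}\subseteq Y_a$.

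The horseshoe step, however, is more direct than your plan suggests, and your description of it is where the proposal becomes vague. Given $y\in L\cap(a,z)$ with $z\in F_a$ and $f^p(z)=a$ for some $p\geq 1$, the paper simply sets $I=[a,y]$ and $J=[y,z]$. Then $f^p(J)$ is a connected set containing $a=f^p(z)$ and $f^p(y)$, while Lemma~\ref{horsehoe_dendrites} supplies iterates so that the arc from $a$ to some $f^q(y)$ covers an arc $[a,f^k(x)]$; combining this with the recurrence of the orbit of $x$ near $y\in L$ produces the required cover relations for a horseshoe, contradicting zero entropy. There is no ``iteration along the countable arc structure of $Y_a$'', and the dichotomy you propose (periodic point in $L$ versus horseshoe) is not the actual mechanism: the horseshoe is built directly from the single arc $[a,z]$ and its subdivision at $y$. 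Your preliminary step $L\cap F_a=\emptyset$ via $f(L)=L$ is correct but not needed separately, as it is subsumed by the arc argument.
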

\begin{proof}
	We start first by proving that $[a,z] \cap L =\emptyset$ for $z \in F_a$. Indeed, if $F_a= \{a\}$, there is nothing to prove. Else, suppose contrary that there is $y \in [a,z]\cap M, f^p(z)=a, p \geq 1$. We set $I= [a,y]$ and $J=[y,z]$ and easily construct an horseshoe with the help of Lemma \ref{horsehoe_dendrites}. It is actually possible to prove something stronger, that is, $L\cap \overline{F_a}=\emptyset$ (see \cite{Askri}).
\medskip
\medskip
This proves that $ L \subset M \backslash   \bigcup _{z \in \overline{F_a}} [a,z]$. Hence , by Lemma \ref{dendrites_convexhull}, we have $ L \subset M \backslash Y_a$. Again by Lemma \ref{dendrites_convexhull}, $E(Y_a) \subset \overline{F_a}$ and $E(M)'\subset E(M) \subset L$ then $E(Y_a) \cap E(M)' =\emptyset$. Hence, with Lemma \ref{connected_components}, $Y_a$ is a tree and $M\backslash Y_a$ has finitely many connected components.
\end{proof}	
	
\noindent We are now able to have a better description of $M$ and understand the action of $f$ on it. In fact we have the following lemma.
\begin{lem}	Under above notations, we have\begin{enumerate}
		\item  \label{One}If $F_a \neq \{a\}$, for any $y \in L, (a,y] \cap F_a \neq \emptyset$.
		\item  \label{Two} $M \backslash F_a$ has finitely many connected components $C_1, \dots, C_n$ intersecting $L$. We denote by $l_k= L \cap C_k$ each intersection.
		\item \label{Three} For each  $k \in \{1,\dots,n\}$, there is a unique $j:=\sigma(k) \in \{1,\dots,n\}$ such that $f(C_k)\cap C_j \neq \emptyset$ and we have $f(l_k) = l_{\sigma(k)}.$
		\item  \label{Four} Each $l_k$ is clopen relatively to $L$.
		\item  \label{Five} $\sigma$ is an $n$-cycle.
	\end{enumerate}
\end{lem}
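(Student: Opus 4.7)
The plan is to dispatch the five items in order. The topological inputs we lean on are the already-established $L\subset M\setminus Y_a$ with $M\setminus Y_a$ having finitely many connected components, together with $L\cap\overline{F_a}=\emptyset$. The sole dynamical tool is Lemma~\ref{horsehoe_dendrites}, whose role is to convert any configuration in which $a$ fails to separate $L$-points from preimages of $a$ into a horseshoe, contradicting $h(f)=0$.

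For item~(\ref{One}), suppose for contradiction that $F_a\neq\{a\}$ and yet some $y\in L$ satisfies $(a,y]\cap F_a=\emptyset$. Choose $z\in F_a\setminus\{a\}$ with $f^p(z)=a$. The branch $[a,y]$ is then disjoint from preimages of $a$, while $[a,z]$ terminates at one. Using Lemma~\ref{horsehoe_dendrites} to locate an iterate $f^k(x)$ deep inside $(a,y]$ and $f^p$ to fold a small subarc of $[a,z]$ back to $a$, we build a horseshoe for a suitable iterate of $f$, contradicting zero entropy.

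Items~(\ref{Two}) and~(\ref{Four}) are topological. For~(\ref{Two}), since $F_a\subset Y_a$ we have $M\setminus Y_a\subset M\setminus F_a$; each of the finitely many components of $M\setminus Y_a$ lies in a unique component of $M\setminus F_a$, and $L\subset M\setminus Y_a$ meets only finitely many of the latter, which we label $C_1,\dots,C_n$. For~(\ref{Four}), relative openness of $l_k$ in $L$ comes from openness in $M$ of the components of $M\setminus Y_a$ (as $Y_a$ is closed) sitting inside $C_k$; relative closedness from the standard dendrite fact $\partial C_k\subset\overline{F_a}$ together with $L\cap\overline{F_a}=\emptyset$, which gives $\overline{C_k}\cap L=C_k\cap L=l_k$.

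For items~(\ref{Three}) and~(\ref{Five}), use the strong invariance $f(L)=L$ (standard for $\omega$-limit sets). Since $L=\bigsqcup_{k=1}^n l_k$ is a partition into clopen subsets, each $l_k\cap f^{-1}(l_j)$ is clopen in $l_k$; exactly one $j$ makes it nonempty, by arcwise connectedness of $C_k$ together with the fact that a continuum in $X$ joining two distinct $C_j,C_{j'}$ must cross $\overline{F_a}$, while the image points lie in $L$ and thus outside $\overline{F_a}$. This yields $f(l_k)\subset l_{\sigma(k)}$, with equality by $f(L)=L$ and counting. For~(\ref{Five}), $\sigma$ is then a permutation of $\{1,\dots,n\}$, and a transitive point of $L=\omega_f(x)$ visits every $l_k$, forcing $\sigma$ to be a single $n$-cycle. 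The main obstacle is the horseshoe construction in item~(\ref{One}) on a dendrite, where several branches and preimages of $a$ interact simultaneously; once this is settled, (\ref{Two})--(\ref{Five}) reduce to routine bookkeeping using the partition structure.
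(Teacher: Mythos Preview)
Your outline for items (\ref{Two}), (\ref{Four}) and (\ref{Five}) is sound and matches the argument the paper imports from \cite{Askri}. The paper itself only writes out item~(\ref{Three}) in detail, and there your justification contains a genuine gap.

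In (\ref{Three}) you argue that the continuum $f(C_k)$ cannot meet two distinct components $C_j,C_{j'}$ because ``the image points lie in $L$ and thus outside $\overline{F_a}$''. But only $f(l_k)\subset L$; the full image $f(C_k)$ need not lie in $L$, so this does not prevent the arc between two image points from crossing $F_a$. What is actually needed---and what the paper uses---is the backward invariance of $F_a=\bigcup_{n\ge 0}f^{-n}(a)$: from $C_k\cap F_a=\emptyset$ and $f^{-1}(F_a)\subset F_a$ one gets $f(C_k)\cap F_a=\emptyset$. Then, picking $x_i\in f(C_k)\cap C_i$ and $x_j\in f(C_k)\cap C_j$, the arc $[x_i,x_j]\subset f(C_k)$ lies in $M\setminus F_a$, so $C_i\cup[x_i,x_j]\cup C_j$ is connected in $M\setminus F_a$, forcing $C_i=C_j$ by maximality. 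The equality $f(l_k)=l_{\sigma(k)}$ then follows as you say, via $f(L)=L$ and the pigeonhole/surjectivity argument (this is exactly the paper's reasoning).

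Your treatment of (\ref{One}) is only a heuristic (``build a horseshoe''); the actual construction requires care on a dendrite, but the paper does not supply it either, simply citing \cite{Askri}. Once you replace the ``image points lie in $L$'' step by the backward-invariance observation above, your proof of (\ref{Three}) coincides with the paper's.
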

\begin{proof}We give only  the proof of \eqref{Three}. Since, the proof of \eqref{One}, \eqref{Two}, \eqref{Four} and \eqref{Five} is the same as the proof of lemma 5.10 in \cite{Askri}. Let $k \in \{1,\dots,n\}$, then  $f(l_k) \subset f(L)=L=\bigcup_{1\leq i\leq n} l_i.$ Therefore, there is $j$ such that $f(l_k)\cap l_j\neq \emptyset$ then $f(C_k)\cap C_j \neq \emptyset$. Suppose now that there is $i \neq j$ such that $f(C_k)\cap C_i\neq \emptyset$. Let $x_j \in f(C_k)\cap C_j$ and $x_i \in f(C_k)\cap C_i$. We have $[x_j,x_i] \subset f(C_k)$ and $f(C_k) \cap F_a =\emptyset$ so $[x_j,x_i]\cap F_a =\emptyset$. Hence $H= C_j \cup[x_j,x_i]\cup C_i$ is a connected subset of $M$ such that $H \cap F_a =\emptyset$. By maximality of $C_i$ and $C_j$ we have $H=C_i=C_j$ which gives a contradiction.
\medskip

	Let $j = \sigma(k)$, then we have $$f(l_k) \subset l_{\sigma(k)}.$$ Suppose that $f(l_k) \subsetneq l_{\sigma(k)}$. Then there exists $i\neq j$ such that $f(l_i) \cap l_{\sigma(k)}$ since $f(L)=L$.  Consequently, $f(l_i) \subset l_{\sigma(k)}$ and $\sigma(i)=\sigma(k)$. Whence, $\sigma$ is not injective and not surjective and we have $f(L) \subset \bigcup_{r=1}^n l_{\sigma(r)} \subsetneq L$ which gives a contradiction and proves that $f(l_k) = l_{\sigma(k)}$. 
\end{proof}
\medskip
\medskip	
	
\noindent We are now able to give the proof of  Proposition \ref{askri_step1} and Theorem \ref{Askri}.
\medskip

\begin{proof}[\textbf{Proof of Proposition \ref{askri_step1} and Theorem \ref{Askri}.}]
Let $s_1 := [l_1]\subset C_1$ and  $J:= \bigcup_{k=0}^{\infty}f^{kn}(s_1)$. We proceed to establish that $J$ satisfy the conditions of the Proposition \ref{askri_step1}.
\medskip
\medskip
\noindent We notice first that  $J$ is connected as a limit of an increasing sequence of connected sets. Since  $s_1$ is a subdendrite and $s_1 \subset f^{n}(s_1)$. 
\medskip
\noindent For the proof of $(i)$, we use here the construction of $F_a$ and $C_i$ as maximal connected components to show that $J, f(J),\dots,f^{n-1}(J)$ are pairwise disjoint. Indeed, since $s_1 \cap F_a$ and $F_a$ is backward invariant,  $f^i(s_1) \cap F_a = \emptyset$ for $ i \geq 0$ and consequently, $f^k(J) \cap F_a = \emptyset $ for $k \geq 0$.
\medskip
\noindent Suppose now that $f^i(J) \cap f^j(J) \neq \emptyset$, then $f^i(J) \cap f^j(J)$ is  connected. If we take then $u \in l_{\sigma^i(1)}$ and $v \in l_{\sigma^j(1)}$ then $K =C_{\sigma^i(1)} \cup [u,v] \cup C_{\sigma^j(1)}$ is connected in $M$ and disjoint with $F_a$ by maximality, $K=C_{\sigma^i(1)}=C_{\sigma^j(1)}$ and then, $i=j$.
\medskip
	$(ii)$ and $(iii)$ are immediate.
\medskip
\noindent Finally, for	$(iv)$, we  have $f(L \cap f^i(J)) = f(l_{\sigma^i(1)}) = l_{\sigma^{i+1}(1)} = L \cap f^{i+1}(J)$.
\medskip
\noindent We conclude the proof of the theorem by induction.
\end{proof}
\medskip
\medskip
\medskip

\noindent{}Let us proceed now to the proof of our first main result.

\section{Proof of The main result.}\label{SIV}

As we mention in introduction, our proof is based essentially on DPNT which we formulated as follows.
 
\medskip
\begin{prop}\label{periodic}
	If $(x_n)_{n \in \mathbb{N}}$ is eventually periodic sequence, 
	$$ \frac{1}{N} \sum_{n=1}^N \mu (n) x_n \underset{N\to \infty}{\longrightarrow} 0. $$
\end{prop}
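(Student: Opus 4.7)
The plan is to reduce the claim to the classical orthogonality of the Möbius function with Dirichlet characters, which is one standard formulation of DPNT. First I would split $x_n = u_n + v_n$, where $u_n$ agrees with $x_n$ on an initial segment $\{1,\dots,N_0\}$ (past which $x$ is purely periodic of some period $q$) and vanishes afterwards, while $v_n$ is purely $q$-periodic on $\mathbb{N}$. Since $u_n$ is bounded and of finite support,
$$\left|\frac{1}{N}\sum_{n=1}^{N}\mu(n)u_n\right| \leq \frac{N_0\,\|x\|_\infty}{N} = o(1),$$
reducing matters to the purely periodic case.

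For the periodic part I would decompose by residue classes modulo $q$, writing $v_n=\sum_{a=0}^{q-1} c_a\,\mathbf{1}_{n\equiv a\,(q)}$; linearity then gives
$$\frac{1}{N}\sum_{n=1}^{N}\mu(n)v_n = \sum_{a=0}^{q-1} c_a\cdot\frac{1}{N}\sum_{\substack{n\leq N\\ n\equiv a\,(q)}}\mu(n),$$
so it suffices to show that each inner average tends to zero.

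When $\gcd(a,q)=1$, I would invoke orthogonality of Dirichlet characters modulo $q$: the indicator of the class $a$ among integers coprime to $q$ equals $\tfrac{1}{\varphi(q)}\sum_{\chi}\overline{\chi(a)}\chi(n)$, and $n\equiv a\,(q)$ already forces $\gcd(n,q)=1$. Hence
$$\sum_{\substack{n\leq N\\ n\equiv a\,(q)}}\mu(n) \;=\; \frac{1}{\varphi(q)}\sum_{\chi\bmod q} \overline{\chi(a)}\sum_{n\leq N}\mu(n)\chi(n),$$
and each twisted sum is $o(N)$ by the DPNT (equivalently, by non-vanishing of $L(s,\chi)$ on the line $\Re s=1$). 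When $d:=\gcd(a,q)>1$, either $d$ is not squarefree and $\mu(n)=0$ for every $n\equiv a\,(q)$, making the sum identically zero; or $d$ is squarefree, in which case writing $n=dm$ and using $\mu(dm)=\mu(d)\mu(m)\mathbf{1}_{\gcd(m,d)=1}$ reduces the sum, after a Möbius inversion on the coprimality constraint, to finitely many instances of the coprime case already treated.

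The only nontrivial input is DPNT in its twisted form $\sum_{n\leq N}\mu(n)\chi(n)=o(N)$; everything else is elementary bookkeeping. The sole technical wrinkle is the case $\gcd(a,q)>1$, where the non-multiplicativity of $\mu$ across the factorization $n=dm$ forces a squarefree/non-squarefree dichotomy, but this is routine and I expect no deeper obstacle.
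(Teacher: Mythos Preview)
Your argument is correct and standard. Note, however, that the paper does not actually supply a proof of this proposition: it simply states it as a reformulation of the Dirichlet prime number theorem and cites Apostol. What you have written is precisely the routine derivation one would give to justify that reformulation, so there is no real comparison to make---you are filling in a detail the paper leaves to the reader.

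One small slip in your write-up: as described, your decomposition $x_n=u_n+v_n$ does not quite work. If $u_n=x_n$ on $\{1,\dots,N_0\}$ and $u_n=0$ afterwards, then $v_n=x_n-u_n$ vanishes on $\{1,\dots,N_0\}$, and a $q$-periodic sequence vanishing on an interval of length $\geq q$ would be identically zero. The fix is trivial: define $v$ first as the $q$-periodic extension to all of $\mathbb{N}$ of $(x_n)_{n>N_0}$, and then set $u_n=x_n-v_n$; this $u$ has support in $\{1,\dots,N_0\}$ and is bounded by $2\|x\|_\infty$, which is all you need. The rest of the argument (reduction to $\sum_{n\le N,\,n\equiv a\,(q)}\mu(n)=o(N)$, the character expansion in the coprime case, and the squarefree/non-squarefree dichotomy when $\gcd(a,q)>1$) is fine.
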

As a consequence, we have that if the $\omega$-limit set $\omega_{f}(x)$ is finite then for any continuous $\varphi$, we have that 
$$S_N(x,\varphi) \tend{N}{+\infty}0.$$
Indeed, it is easy to see that there exist a periodic point $y$ such that $(x,y)$ is asymptotic pair, that is,
$\lim_{n \rightarrow +\infty }d(f^n(x),f^n(y))=0.$  
We thus need to verify \eqref{first} only when the $\omega_{f}(x)$ is infinite. For that, 
We need also the following lemma from \cite{Ka}.
\begin{lem}\label{holed}
	Let $(a_n) \in [0,1]^\mathbb{N}$ such that there exists $n_0 \in \mathbb{N}$ such that for $n,m \in \mathbb{N} $ if $a_n \neq 0$ and $a_m \neq 0$ then $n=m$ or $|n-m|\geq k$. Then
	$$\underset{N \to \infty}{\limsup} \frac{1}{N}\left|\sum_{n=1}^N a_n \right| \leq \frac{1}{k}$$
\end{lem}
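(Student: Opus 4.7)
The plan is to exploit the spacing hypothesis to bound the cardinality of the support of $(a_n)$ in an initial segment $[1,N]$, and then use the uniform bound $a_n \in [0,1]$ to control the sum crudely.

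More precisely, I would fix $N \in \mathbb{N}$ and set $S_N = \{ n \in \{1,\dots,N\} : a_n \neq 0 \}$. Write the elements of $S_N$ in increasing order as $n_1 < n_2 < \cdots < n_r$. By the hypothesis of the lemma, consecutive elements satisfy $n_{j+1} - n_j \geq k$, so by induction $n_r \geq n_1 + (r-1)k \geq 1 + (r-1)k$. Since $n_r \leq N$, this forces $r \leq \frac{N-1}{k} + 1 \leq \frac{N}{k} + 1$. Hence the support of $(a_n)$ in $[1,N]$ has cardinality at most $\frac{N}{k}+1$.

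Now since $a_n \in [0,1]$, the sum is nonnegative and
\[
\left| \sum_{n=1}^N a_n \right| \;=\; \sum_{n \in S_N} a_n \;\leq\; |S_N| \;\leq\; \frac{N}{k} + 1.
\]
Dividing by $N$ and passing to the $\limsup$ as $N \to \infty$ yields
\[
\limsup_{N\to\infty} \frac{1}{N}\left| \sum_{n=1}^N a_n \right| \;\leq\; \frac{1}{k},
\]
which is the desired conclusion.

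There is no real obstacle here; the only mild subtlety is a correct counting of the support under the spacing constraint, which is handled by the inductive estimate $n_r \geq 1 + (r-1)k$. The statement in the paper mentions an auxiliary $n_0$ (presumably a threshold past which the spacing condition applies), but this does not affect the asymptotic argument: truncating the first $n_0$ terms changes $\sum_{n=1}^N a_n$ by at most $n_0$, which is $o(N)$ and hence does not affect the $\limsup$.
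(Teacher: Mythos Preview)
Your argument is correct: the spacing hypothesis forces $|S_N|\le N/k+1$, and since $a_n\in[0,1]$ the sum is at most $|S_N|$, so dividing by $N$ and taking the $\limsup$ gives $1/k$. The paper itself does not supply a proof of this lemma but simply cites it from \cite{Ka}, so there is no approach to compare; your elementary counting argument is exactly what is needed.
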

\noindent At this point, let us assume that $L=\omega_{f}(x)$ is infinite. We need thus to examine two cases according to $L$ is countable or not. In the first case, it follows that $Y=\overline{\textrm{Orb}}(x)={\textrm{Orb}}(x) \cup L$ is countable, Hence, $(Y,f|_{Y})$ is tame and  \eqref{first} holds by \cite[Theorem 1.8]{HWZ}. We can thus suppose $L$ uncountable.
\medskip

\noindent As in the proof given by D. Karagulyan \cite{Ka} and e. H. el Abdalaoui, G. Askri and H. Marzougui \cite{ela-GH}, we are going to make a proof by an approximation by step functions. Let $\varepsilon >0$ and let $\delta >0$ an uniform continuity modulus adapted to $\varphi$. We are going to construct a decomposition of $X$ into sets of diameter $< \delta$ and such that each set shares at most two points with the rest of $X$.

\noindent We use Proposition \ref{decomposition_nadler} to get a first decomposition. There exists open connected subsets $V_i, i=1,\dots, r$ such that
\begin{enumerate}
	\item $ X = \bigcup_{i=1}^r \overline{V_i}$
	\item \label{ii}$\mathrm{diam}(V_i) < \delta$
	\item $card (V_i \cap V_j) \leq 1$ if $i\neq j$
\end{enumerate}
\noindent To perform such decomposition, we apply Proposition \ref{Cut-dense} to see that there is a finite set $F$ of cut points in $X$ such that each component $V_i$ of $X \setminus F$ \eqref{ii} is satisfied (see \cite[p.188]{Nadler}) and (see also the proof of \cite[Theorem 5. p 302]{KurI}).

\noindent Now, in order to get the same kind of estimate  as in the proof by D. Karagulyan \cite{Ka} and e. H. el Abdalaoui, G. Askri and H. Marzougui \cite{ela-GH}, we need to modify the sets $V_i$ such that each $V_i$ share at most two points with the rest of the decomposition.



\noindent We set $i \in\{1,\dots, r\}$ and consider the set $V_i$.
\medskip
\noindent We put $\partial V_i =\{f_1,\dots,f_{p_i}\}$ with $f_1,\dots,f_{p_i} \in F$. We consider now the tree which has endpoints the $f_1,\dots,f_{p_i}$.
$$ T = \bigcup_{1\leq i,j \leq p} [f_i,f_j] \subset \overline{V_i}.$$
We split $T$ into arcs by writing
$$ T= \bigcup_{j=1}^m \overline{a_j},$$
with $a_j$ disjoint open arcs, $a_j \cap F=\emptyset$ and  $ card(\overline{a_j} \cap F) = card(\overline {a_j} \cap \{f_1,\dots,f_p\})\leq 2.$

\noindent We set now $(C_k)_{ k \in K}$ the open connected components of $V_i \backslash T$ and let $r : \overline{V_i} \to T$ the first point map of $T$ in $V_i$. Since $r$ is locally constant, $r$ is constant on each $C_k$ and we denote by $x_k \in T$ the value on $C_k$. Since $\{f_1,\dots, f_p\}$ are cutpoints, then $x_k \in T \backslash \{f_1,\dots, f_p\}$. Hence, there exists a unique $j= c(k) \in \{1,\dots,m\}$ such that $x_k \in a_{j_k}$. We have then a function  $ c : K \to \{1,\dots,m\}$ such that for any $k \in K$, and $y \in C_k$, $r(y) \in a_{c(k)}$.

\noindent We finally set for $j \in \{1,\dots,m\}$,
$$ b_j= a_j \cup \bigcup_{c(k)=j} C_k. $$

\noindent Then every $b_j$ is connected since every point in $b_j$ is connected to $x_{c(j)}$, and they are all pairwise disjoint by construction. We further have that $\overline{b_i} $ and $\overline{b_j}$ are either disjoint or intersect on one of their endpoints. Furthermore, since $\partial C_k = x_{c(k)} \in a_{c(k)}$, we have
$$ \partial b_j = \partial a_j $$

\noindent Whence, we have the desired  property , that is,
$$card(\partial b_j) = card(\partial a_j) \leq 2,$$

and at the same time, we have
$$\overline{V_i} = \bigcup_{j=1}^m \overline{b_j}$$
\medskip
\noindent We explain the construction with these drawings. 
\begin{figure}[ht!]
	\begin{center}
		\begin{tikzpicture}[scale=6]


		\draw[dashed] (0,0) -- (0.25,0);
		
		\draw[line width = 1.5pt] (0.25,0) -- (0.5,0);
		\draw[line width = 1.5pt] (0,0.4) -- (0.35,0);
		\draw[line width = 1.5pt,color=red] (0.5,0) -- (1.7,0);
		\draw[line width = 1.5pt] (1.7,0) -- (2,0);
		\draw[dashed] (2,0) -- (2.2,0);

		\draw[line width = 1.5pt,color=red] (0.7,0) -- (0.7,0.3);
		\draw[line width = 1.5pt,color=red] (0.8,0) -- (0.8,0.2);
		\draw[line width = 1.5pt,color=red] (0.85,0) -- (0.85,0.08);
		\draw[line width = 1pt,color=red] (0.87,0) -- (0.87,0.04);
		\draw[line width = 0.8pt,color=red] (0.89,0) -- (0.89,0.02);
		
		\draw[line width = 1.5pt,color=red] (0.6,0) -- (0.72,-0.2);
		\draw[line width = 1.5pt] (0.72,-0.2) -- (0.74,-0.24);
		\draw[line width = 1.5pt] (0.74,-0.24) -- (0.68,-0.24);
		\draw[line width = 1.5pt] (0.74,-0.24) -- (0.8,-0.24);
		\draw[dashed] (0.68,-0.24) -- (0.3,-0.24);

		\draw[line width = 1.5pt,color=red] (1,0) -- (1.3,0.3);
		\draw[line width = 1.5pt,color=red] (1,0) -- (1.3,-0.3);
		\draw[line width = 1.5pt,color=red] (1,0) -- (1,-0.3);
		\draw[line width = 1.5pt,color=red] (1.3,0.3) -- (1.3,0.5);
		\draw[line width = 1.5pt,color=red] (1.3,0.3) -- (1.1,0.5);
		\draw[line width = 1.5pt,color=red] (1.3,0.3) -- (1.5,0.3);
		\draw[line width = 1.5pt] (1.5,0.3) -- (1.8,0.3);

		\draw(0.55,0) node[below,color = red]{$f_1$};
		\draw(0.8,-0.1) node[below,color = red]{$f_2$};
		\draw(1.7,0) node[below,color = red]{$f_3$};
		\draw(1.5,0.3) node[below,color = red]{$f_4$};

		\draw(1.27,-0.1) node[below,color=red]{$V_i$};
		
		\draw(0.10,0.45) node[above] {$ X \backslash V_i$};

				\end{tikzpicture}
	\end{center}
	
\end{figure}

\begin{figure}[ht!]
	\begin{center}
		\begin{tikzpicture}[scale=6]


		\draw[dashed] (0,0) -- (0.25,0);
		
		\draw[line width = 1.5pt] (0.25,0) -- (0.5,0);
		\draw[line width = 1.5pt] (0,0.4) -- (0.35,0);
		\draw[line width = 1.5pt,color=green] (0.5,0) -- (1.7,0);
		\draw[line width = 1.5pt] (1.7,0) -- (2,0);
		\draw[dashed] (2,0) -- (2.2,0);

		\draw[line width = 1.5pt,color=red] (0.7,0) -- (0.7,0.3);
		\draw[line width = 1.5pt,color=red] (0.8,0) -- (0.8,0.2);
		\draw[line width = 1.5pt,color=red] (0.85,0) -- (0.85,0.08);
		\draw[line width = 1pt,color=red] (0.87,0) -- (0.87,0.04);
		\draw[line width = 0.8pt,color=red] (0.89,0) -- (0.89,0.02);
		
		\draw[line width = 1.5pt,color=green] (0.6,0) -- (0.72,-0.2);
		\draw[line width = 1.5pt] (0.72,-0.2) -- (0.74,-0.24);
		\draw[line width = 1.5pt] (0.74,-0.24) -- (0.68,-0.24);
		\draw[line width = 1.5pt] (0.74,-0.24) -- (0.8,-0.24);
		\draw[dashed] (0.68,-0.24) -- (0.3,-0.24);

		\draw[line width = 1.5pt,color=green] (1,0) -- (1.3,0.3);
		\draw[line width = 1.5pt,color=red] (1,0) -- (1.3,-0.3);
		\draw[line width = 1.5pt,color=red] (1,0) -- (1,-0.3);
		\draw[line width = 1.5pt,color=red]
		(1.3,0.3) -- (1.3,0.5);
		\draw[line width = 1.5pt,color=red] (1.3,0.3) -- (1.1,0.5);
		\draw[line width = 1.5pt,color=green] (1.3,0.3) -- (1.5,0.3);
		\draw[line width = 1.5pt] (1.5,0.3) -- (1.8,0.3);


		\draw(0.55,0) node[below,color = red]{$f_1$};
		\draw(0.8,-0.1) node[below,color = red]{$f_2$};
		\draw(1.7,0) node[below,color = red]{$f_3$};
		\draw(1.5,0.3) node[below,color = red]{$f_4$};

		\draw(1.27,0) node[above,color=green]{$T$};

		\end{tikzpicture}
	\end{center}
	
\end{figure}

\begin{figure}[ht!]
	\begin{center}
		\begin{tikzpicture}[scale=6]
		
		\draw[line width = 1.5pt,color=green] (1,0) -- (1.3,0.3);
		\draw[line width = 1.5pt,color=green] (0.5,0) -- (1.7,0);
		\draw[line width = 1.5pt,color=green] (1.3,0.3) -- (1.5,0.3);
		\draw[line width = 1.5pt,color=green] (0.6,0) -- (0.72,-0.2);
		\draw(0.55,0) node[below,color = red]{$f_1$};
		\draw(0.62,-0.13) node[below,color = red]{$f_2$};
		\draw(1.7,0) node[below,color = red]{$f_3$};
		\draw(1.5,0.3) node[below,color = red]{$f_4$};
		\end{tikzpicture}
	\end{center}
	
\end{figure}

\begin{figure}[ht!]
	\begin{center}
		\begin{tikzpicture}[scale=6]
		
		\draw[line width = 1.5pt,color=blue] (1,0) -- (1.3,0.3);
		\draw[line width = 1.5pt,color=yellow] (0.5,0) -- (1.7,0);
		\draw[line width = 1.5pt,color=blue] (1.3,0.3) -- (1.5,0.3);
		\draw[line width = 1.5pt,color=violet] (0.6,0) -- (0.72,-0.2);
		\draw(0.55,0) node[below,color = red]{$f_1$};
		\draw(0.62,-0.13) node[below,color = red]{$f_2$};
		\draw(1.7,0) node[below,color = red]{$f_3$};
		\draw(1.5,0.3) node[below,color = red]{$f_4$};

		\draw(1.3,0) node[above,color = yellow]{$a_1$};
		\draw(0.8,-0.1) node[below,color = violet]{$a_2$};
		\draw(1.1,0.2) node[above,color = blue]{$a_3$};
		\end{tikzpicture}
	\end{center}
	
\end{figure}

\begin{figure}[ht!]
	\begin{center}
		\begin{tikzpicture}[scale=6]

		
		\draw[line width = 1.5pt,color=blue] (1,0) -- (1.3,0.3);
		\draw[line width = 1.5pt,color=yellow] (0.5,0) -- (1.7,0);
		\draw[line width = 1.5pt,color=blue] (1.3,0.3) -- (1.5,0.3);
		\draw[line width = 1.5pt,color=violet] (0.6,0) -- (0.72,-0.2);
		
		\draw[dashed] (0,0) -- (0.25,0);
		
		\draw[line width = 1.5pt] (0.25,0) -- (0.5,0);
		\draw[line width = 1.5pt] (0,0.4) -- (0.35,0);
		
		\draw[line width = 1.5pt] (1.7,0) -- (2,0);
		\draw[dashed] (2,0) -- (2.2,0);

		\draw[line width = 1.5pt,color=red] (0.7,0) -- (0.7,0.3);
		\draw[line width = 1.5pt,color=red] (0.8,0) -- (0.8,0.2);
		\draw[line width = 1.5pt,color=red] (0.85,0) -- (0.85,0.08);
		\draw[line width = 1pt,color=red] (0.87,0) -- (0.87,0.04);
		\draw[line width = 0.8pt,color=red] (0.89,0) -- (0.89,0.02);

		\draw[line width = 1.5pt] (0.72,-0.2) -- (0.74,-0.24);
		\draw[line width = 1.5pt] (0.74,-0.24) -- (0.68,-0.24);
		\draw[line width = 1.5pt] (0.74,-0.24) -- (0.8,-0.24);
		\draw[dashed] (0.68,-0.24) -- (0.3,-0.24);

		\draw[line width = 1.5pt,color=red] (1,0) -- (1.3,-0.3);
		\draw[line width = 1.5pt,color=red] (1,0) -- (1,-0.3);
		\draw[line width = 1.5pt,color=red] (1.3,0.3) -- (1.3,0.5);
		\draw[line width = 1.5pt,color=red] (1.3,0.3) -- (1.1,0.5);
		
		\draw[line width = 1.5pt] (1.5,0.3) -- (1.8,0.3);

		\draw(0.55,0) node[below,color = red]{$f_1$};
		\draw(0.62,-0.13) node[below,color = red]{$f_2$};
		\draw(1.7,0) node[below,color = red]{$f_3$};
		\draw(1.5,0.3) node[below,color = red]{$f_4$};

		\draw(1.3,0) node[above,color = yellow]{$a_1$};
		\draw(0.8,-0.1) node[below,color = violet]{$a_2$};
		\draw(1.1,0.2) node[above,color = blue]{$a_3$};		
		\end{tikzpicture}
	\end{center}
	
\end{figure}

\begin{figure}[ht!]
	\begin{center}
		\begin{tikzpicture}[scale=6]

		
		\draw[line width = 1.5pt,color=blue] (1,0) -- (1.3,0.3);
		\draw[line width = 1.5pt,color=yellow] (0.5,0) -- (1.7,0);
		\draw[line width = 1.5pt,color=blue] (1.3,0.3) -- (1.5,0.3);
		\draw[line width = 1.5pt,color=violet] (0.6,0) -- (0.72,-0.2);
		
		\draw[dashed] (0,0) -- (0.25,0);
		
		\draw[line width = 1.5pt] (0.25,0) -- (0.5,0);
		\draw[line width = 1.5pt] (0,0.4) -- (0.35,0);
		
		\draw[line width = 1.5pt] (1.7,0) -- (2,0);
		\draw[dashed] (2,0) -- (2.2,0);

		\draw[line width = 1.5pt,color=yellow] (0.7,0) -- (0.7,0.3);
		\draw[line width = 1.5pt,color=yellow] (0.8,0) -- (0.8,0.2);
		\draw[line width = 1.5pt,color=yellow] (0.85,0) -- (0.85,0.08);
		\draw[line width = 1pt,color=yellow] (0.87,0) -- (0.87,0.04);
		\draw[line width = 0.8pt,color=yellow] (0.89,0) -- (0.89,0.02);

		\draw[line width = 1.5pt] (0.72,-0.2) -- (0.74,-0.24);
		\draw[line width = 1.5pt] (0.74,-0.24) -- (0.68,-0.24);
		\draw[line width = 1.5pt] (0.74,-0.24) -- (0.8,-0.24);
		\draw[dashed] (0.68,-0.24) -- (0.3,-0.24);

		\draw[line width = 1.5pt,color=yellow] (1,0) -- (1.3,-0.3);
		\draw[line width = 1.5pt,color=blue] (1,0) -- (1,-0.3);
		\draw[line width = 1.5pt,color=blue] (1.3,0.3) -- (1.3,0.5);
		\draw[line width = 1.5pt,color=blue] (1.3,0.3) -- (1.1,0.5);
		
		\draw[line width = 1.5pt] (1.5,0.3) -- (1.8,0.3);

		\draw(0.55,0) node[above,color = red]{$f_1$};
		\draw(0.6,-0.13) node[below,color = red]{$f_2$};
		\draw(1.7,0) node[below,color = red]{$f_3$};
		\draw(1.5,0.3) node[below,color = red]{$f_4$};

		\draw(1.3,0) node[above,color = yellow]{$b_1$};
		\draw(0.8,-0.1) node[below,color = violet]{$b_2$};
		\draw(1.1,0.2) node[above,color = blue]{$b_3$};


		\end{tikzpicture}
	\end{center}
	
\end{figure}
\medskip
\medskip

\noindent By applying this same decomposition on every $V_i$, we get a collection $(b_i)_{i=1}^s$ of connected open subsets of $X$ such that

\begin{enumerate}
	\item $ X = \bigcup_{i=1}^s \overline{b_i},$ 
	\item $\mathrm{diam}(b_i) < \delta$ since each $b_i$ is a subset of one $V_j$,
	\item $card (\overline{b_j} \cap \overline{b_i}) \leq 1$,
	\item $card(\overline{b_j}\backslash b_j) \leq 2.$
\end{enumerate}

\noindent We are now going to make the proof with step functions, and conclude by a density argument. We consider 
$$ \psi_{(b_i)}  : \begin{cases} 
\: [0,1] \to  \mathbb{R}\\
\: x \longmapsto 1 \text{ if } x \in b_i,\\
\: x \longmapsto \frac{1}{ord(x)} \text{ if } x \in \partial b_i ,\\
\: x \longmapsto 0 \text{ otherwise}.

\end{cases} .$$

\noindent  For each $b_i$, we take $y_i \in b_i$ and set $c_i = \varphi(y_i)$ . We set $\varphi_0 = \sum_{i=0}^{r} c_i \psi_{(b_i)}$ with $c_i, i=0,\dots, r$. Since $diam(b_i)< \delta$, we have for $x \in X$
$$ \big| \varphi(x)-\varphi_0(x)\big| \leq \varepsilon .$$

\noindent Since $w_f(x)$ is infinite, for each $k$, there is one dendrite $f^i(D^k)$ such that $$w_f(x) \cap \textrm{int} \big(f^i(D_k)\big) \neq \emptyset .$$
Then there exists $n_0 \in \mathbb{N}$ such that $$f^{n_0}(x) \in \textrm{int} \big(f^i(D_k)\big).$$ But with the properties of $(f^j(D_k))_{j=1,\dots,\alpha_k}$, we have for $m \in \mathbb{N}$
$$ f^{n_0+m}(x) \in f^{i+m\textrm{ mod } \alpha_k}(D_k).$$
\noindent We can thus write

\begin{align*}
S_N(x, \psi_{(b_i)}) &= \frac{1}{N} \sum_{n<n_0} \mu(n) \psi_{(b_i)}(f^n(x))+\frac{1}{N}\sum_{n=n_0}^N \mu(n) \psi_{(b_i)}(f^n(x))\\
&= o(1) + \sum_{j=0}^{\alpha_k-1}\frac{1}{N}\sum_{n_0}^N \mu(n) \psi_{(b_i)}(f^n(x))\mathds{1}_{f^j(D_k)}(f^n(x)).
\end{align*}

\noindent Then we show that for $j\in \{0,\dots,\alpha_k-1\}, x_n^j:=\psi_{(b_i)}(f^n(x))\mathds{1}_{f^j(D_k)}(f^n(x))$ for $n\geq n_0$ almost behaves like a periodic sequence and control the other terms. Indeed, if $f^j(D_k)$ does not contain any of the points of $\partial b_i$, then $(x_n^j)_{n\geq n_0}$ is either constantly equal to 0 ( if $f^j(D_k)\cap b_i=\emptyset$) or periodic ( if $f^j(D_k)\subset b_i
$ ). Then, from Proposition \ref{periodic}, in both cases, we have 
$$ A_N^j := \frac{1}{N}\sum_{n=n_0}^N \mu(n) \psi_{(b_i)}(f^n(x))= \frac{1}{N} \sum_{n=n_0}^N \mu(n)x_n^j = o(1).$$ 

\noindent If now on the contrary, $f^j(D_k)$ contains one or two points of $\partial b_i$, we can notice that if $x_n^j \neq 0$ and $x_m^j \neq 0$ then $n=m$ or $|n-m|\geq \alpha_k$. By Lemma \ref{holed} once again, we have

$$\underset{N \to \infty}{\limsup}A_N^j \leq \frac{1}{\alpha_k}.$$
There at most two intervals $f^j(D_k),f^l(D_k)$ in which this situation can occur since $card(\partial b_i) \leq 2$. It implies that

$$ \underset{N \to \infty}{\limsup} |S_N(x, \psi_{(b_i)})|\leq
\underset{N \to \infty}{\limsup} \sum_{j=0}^{\alpha_k-1}|A_N^s|\leq \frac{2}{\alpha_k}.$$

\noindent Letting $k\to \infty$, we obtain the desired result.

\section{\bf Remarks on Li-Oprocha-Zhang's reduction theorem.}\label{SV}

In this section, we recall the reduction of Sarnak M\"{o}bius disjointness to the Gehman dendrites obtained by J. Li, P. Oprocha and G-H. Zhang. Precisely, they established the following
\begin{thm}[\cite{JOZ}, Corollary 1.7]
	The Sarnak's conjecture is true on every dynamical system with zero topological entropy if and only if it is so for all surjective dynamical systems over the Gehman dendrite with zero topological entropy.
\end{thm}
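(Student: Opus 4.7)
The forward implication is trivial, since surjective zero-entropy systems over the Gehman dendrite form a subclass of all zero-entropy topological dynamical systems. The substance of the theorem lies in the converse: assuming Sarnak disjointness for every surjective zero-entropy map on the Gehman dendrite $G$, I want to deduce it for an arbitrary zero-entropy system $(X,f)$. My plan is to exhibit $(X,f)$ as a topological factor of some surjective zero-entropy system $(G,F)$, and then pull back observables: if $\pi:(G,F)\to (X,f)$ is a factor map, $x\in X$, and $\varphi\in C(X)$, pick any $y\in \pi^{-1}(x)$; since $\varphi\circ\pi\in C(G)$, we have
\[
S_N(x,\varphi)=\frac{1}{N}\sum_{n=1}^N \bmu(n)\varphi(f^n(x))=\frac{1}{N}\sum_{n=1}^N \bmu(n)(\varphi\circ\pi)(F^n(y))=S_N(y,\varphi\circ\pi)\to 0,
\]
so Sarnak disjointness descends from $(G,F)$ to $(X,f)$.

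The main work is the construction of $(G,F)$. I would do it in two steps. First, I reduce to a zero-dimensional base: choose a continuous surjection $q:C\to X$ from the Cantor set $C=\{0,1\}^{\mathbb N}$ and form the natural inverse-type cover
\[
\Omega=\{(c_n)_{n\ge 0}\in C^{\mathbb Z_+}\,:\, q(c_{n+1})=f(q(c_n))\;\forall n\},
\]
equipped with the shift $\sigma$. Then $\Omega$ is a compact zero-dimensional subsystem, $\pi_0((c_n))=q(c_0)$ is a factor map onto $(X,f)$, and the variational/Bowen fibre inequality gives $h(\sigma\!\restriction\!\Omega)=h(f)=0$. Identifying $\Omega$ (which is totally disconnected compact metric, hence a closed subset of $C$) with a clopen subset of the endpoint set $E(G)$ of the Gehman dendrite allows me to regard $\sigma$ as a self-map of a piece of $E(G)$.

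Second, I extend the endpoint dynamics to all of $G$. Recall that $G$ has the canonical structure of a binary tree: branch points are indexed by finite binary words $w$, with order $3$, and the closure in $G$ of each cylinder $[w]\subset E(G)$ is a sub-dendrite attached at the corresponding branch point. For each branch point $w$, define $F(w)$ to be the branch point associated to the image cylinder under the endpoint map, and extend linearly along each free arc between consecutive branch points. After possibly replacing $G$ by its iterated forward image (or by performing an extra inverse-limit step) I obtain a \emph{surjective} continuous self-map $F$ of $G$ whose restriction to $E(G)$ projects onto $(X,f)$. Because $F$ acts on each free arc by a single linear map and the induced map on the dense set of branch points is a factor of a zero-entropy zero-dimensional system, Bowen's lemma on entropy of factors and fibres gives $h(F)=0$.

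The main obstacle is the second step: extending a zero-dimensional map to the whole Gehman dendrite while (a) keeping it continuous, (b) making it surjective, and (c) preserving zero topological entropy. Point (c) is delicate because extending dynamics along arcs can, a priori, create horseshoe behaviour or complicated recurrence between different branches; the remedy is to choose the extension to act affinely on each free arc and to use the tree structure so that the non-wandering set of $F$ lies in $E(G)$, reducing the entropy computation to that of the zero-dimensional cover constructed above. Once $(G,F)$ is in hand, the pullback of observables performed in the first paragraph completes the argument, establishing the equivalence.
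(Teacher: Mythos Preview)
Your overall architecture --- show that every zero-entropy system is a factor of a surjective zero-entropy Gehman dendrite system, then pull back observables --- matches the paper's outline. The pullback paragraph is fine. The problem is Step~1: your zero-dimensional cover $\Omega$ does \emph{not} have zero entropy, so the chain breaks at the first link.

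Take $X=\{*\}$ a single point and $f=\mathrm{id}$, so $h(f)=0$. Your defining relation $q(c_{n+1})=f(q(c_n))$ becomes vacuous, and $\Omega=C^{\mathbb Z_+}$ is the full one-sided shift over the Cantor set, which has infinite topological entropy. More generally, the fibre of $\pi_0$ over $x$ is the full product $\prod_{n\ge 0} q^{-1}(f^n(x))$, and the shift moves freely among these arbitrary choices; Bowen's inequality $h(\sigma)\le h(f)+\sup_x h(\sigma,\pi_0^{-1}(x))$ therefore gives you nothing, because the fibre term is uncontrolled. The claimed equality $h(\sigma|_\Omega)=h(f)$ is simply false.

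Producing a zero-entropy \emph{symbolic} extension of an arbitrary zero-entropy system is a genuine theorem, not a construction one can write down by hand: the paper invokes the result of Boyle, Fiebig and Fiebig \cite{boyle} that every zero-entropy system is a factor of a two-sided subshift of zero entropy, then passes to a one-sided subshift. Only after this nontrivial reduction does the Li--Oprocha--Zhang embedding theorem (every one-sided subshift embeds into a surjective Gehman dendrite system with the same topological entropy) apply. Your Step~2, as written, also presupposes a subshift over a finite alphabet so that cylinders correspond to subtrees of $G$; the Cantor system $(\Omega,\sigma)$ you built is not of that form, so even had its entropy been zero, the extension-to-arcs construction would not go through as stated. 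In short, replace your ad hoc cover by an appeal to \cite{boyle}, and then the rest of your plan aligns with the paper's sketch.
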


\noindent We recall that the Gehman dendrite is the topologically unique dendrite whose set of endpoints is homeomorphic to the Cantor set and whose branch points are all of order $3$.
\medskip
\noindent The principal ingredient in their proof is the following
\begin{thm}
	Every one sided-subshift can be embedded to a surjective dynamical system on the Gehman dendrite with the same topological entropy.
\end{thm}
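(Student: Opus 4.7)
Given a one-sided subshift $(X, S)$ on a finite alphabet $\mathcal{A}$, the plan is to construct a continuous surjective map $F: G \to G$ together with a closed $F$-invariant set $Y \subseteq G$ such that $(Y, F|_Y)$ is topologically conjugate to $(X, S)$ and $h(F) = h(S)$. I would proceed in three steps: build an abstract symbolic dendrite carrying the subshift, embed it into $G$, and extend the dynamics to all of $G$ without raising entropy.

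First, I construct a symbolic dendrite $D_X$ associated to the subshift. Let $L(X)$ be the prefix-closed language of $X$, and form the rooted tree with vertex set $L(X)$ and an edge of length $2^{-n}$ between each word $w$ of length $n$ and each of its one-letter extensions $wa \in L(X)$. The metric completion $D_X$ is a dendrite whose branch/cut points are parametrized by $L(X)$ and whose endpoint set is canonically identified with $X$ via infinite descending paths. Define $\Phi_X: D_X \to D_X$ on endpoints by the shift and on a branch point labeled $w = w_1 \cdots w_n$, $n \geq 1$, by $\Phi_X(b_w) = b_{w_2 \cdots w_n}$, extended linearly along edges. A direct check shows $\Phi_X$ is continuous with $h(\Phi_X) = h(S)$.

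Next, I embed $D_X$ into $G$. When $|\mathcal{A}| \geq 3$, branch points of $D_X$ may have order larger than $3$, so I first binarize: each high-order branch point is replaced by a small ternary subtree whose leaves are the original edges, yielding a homeomorphic dendrite $\tilde D_X$ with all branch points of order $3$ and endpoint set still $X$. The map $\Phi_X$ lifts to a self-map $\tilde \Phi_X$ of $\tilde D_X$ with the same topological entropy. Assuming $X$ uncountable (the countable case being trivially tame), $\tilde D_X$ has a Cantor endpoint set and only ternary branch points, so it embeds as a subdendrite of $G$ by the universality of the Gehman dendrite among such objects.

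The hard part is the last step: extending $\tilde \Phi_X$ from $\tilde D_X$ to a continuous surjective $F: G \to G$ with $h(F) = h(\tilde \Phi_X)$. Using the structural results of Section \ref{SII}, I decompose $G = \tilde D_X \cup \bigcup_i U_i$, where each $U_i$ is a subdendrite meeting $\tilde D_X$ at a single point $p_i$. Set $F|_{\tilde D_X} = \tilde \Phi_X$, and on each $U_i$ define $F$ to be a continuous map onto a carefully chosen arc in $G$ issuing from $\tilde \Phi_X(p_i)$, arranged so that (i) continuity holds at each attachment point $p_i$, (ii) every point of $G \setminus \tilde D_X$ is absorbed into $\tilde D_X$ after finitely many iterates, so that the transient dynamics contributes no new entropy, and (iii) the union of the image arcs sweeps out $G \setminus \tilde D_X$, giving surjectivity of $F$. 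The main obstacle is arranging (ii) and (iii) simultaneously when the components $U_i$ are countably infinite and their attachment points accumulate on $\tilde D_X$; this requires a careful enumeration of the $U_i$ and a recursive design of the auxiliary arcs, exploiting the general principle that injecting a zero-entropy preperiodic set into a subsystem does not raise its topological entropy.
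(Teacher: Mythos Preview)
The paper does not prove this theorem. It is quoted from \cite{JOZ} as a black box, and the surrounding paragraph (invoking Boyle--Fiebig--Fiebig) concerns how the embedding theorem is \emph{used} to obtain the reduction corollary, not how it is proved. So there is no argument in the paper to compare your proposal against; any assessment has to be on the internal merits of your sketch.

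On those merits, several steps are genuine gaps rather than routine details. First, your ``binarization'' cannot yield a dendrite homeomorphic to $D_X$: replacing a branch point of order $k>3$ by a ternary subtree changes the branch-point structure, so $\tilde D_X$ is a different continuum, and there is no reason the map $\Phi_X$ ``lifts'' to it in any canonical way---you must define $\tilde\Phi_X$ on the inserted arcs and check continuity and the entropy equality from scratch. Second, dismissing countable $X$ as ``trivially tame'' is beside the point: tameness is irrelevant to the statement, which asks for an embedding into a surjective Gehman system of the same entropy regardless of the cardinality of $X$. Third, and most seriously, your Step~3 is not a proof but a list of desiderata. Conditions (ii) and (iii) are in real tension: if every point of $G\setminus\tilde D_X$ is absorbed into $\tilde D_X$ in finitely many steps while $F$ is onto, then every complementary piece $U_i$ must lie in the $F$-image of some other $U_j$, forcing infinite backward chains among the $U_i$; arranging this so that $F$ is continuous at the accumulation points of the attaching set $\{p_i\}$ (which is typically dense in $\tilde D_X$) is exactly the substance of the construction, and you have not supplied it. The phrase ``carefully chosen arc'' and the appeal to a ``general principle'' about preperiodic sets do not discharge this obligation.
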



\noindent According to the result by M. Boyle, D. Fiebig and U.Fiebig \cite{boyle}, the principal idea is to use the fact that every dynamical system of zero entropy can be seen as a factor of a two sided subshift with zero topological entropy, then get back to a one-side subshift with zero topological entropy. For more details, we refer to \cite{JOZ},

\medskip



It follows that in the class of Gehman dendrite with zero topological entropy the spectrum can be any spectrum realized by any map with zero topological entropy. Nervelessness, we ask 
 
\begin{Que} Let $(X,f)$ be a dynamical system with zero topological entropy and endpoints countable, do we have that its spectrum is singular. 
\end{Que}

\textbf{Acknowledgments.} This paper is a part of the Master thesis by the second author. The authors would like to thanks G. Askri, K.  Dajani, H. Marzougui, M. Nerurkar, O. Sarig, and X-D. Ye for their comments and suggestions.

\bibliographystyle{amsplain}

\end{document}